\documentclass[11pt,english,a4paper]{article}

\pdfoutput=1

\usepackage[latin1]{inputenc}
\usepackage[T1]{fontenc}
\usepackage{lmodern}
\usepackage{babel}
\usepackage{amsmath,amssymb,amsthm}
\usepackage[totalwidth=400pt,totalheight=630pt]{geometry}
\usepackage{microtype}
\usepackage{hyperref}
\usepackage{url}

\newtheorem{theorem}{Theorem}[section]
\newtheorem{lemma}[theorem]{Lemma}
\newtheorem{proposition}[theorem]{Proposition}
\newtheorem{corollary}[theorem]{Corollary}

\theoremstyle{definition}

\newtheorem{example}[theorem]{Example}
\newtheorem{question}[theorem]{Question}

\theoremstyle{remark}

\numberwithin{equation}{section}

\newcommand{\Z}{\mathbb{Z}}
\newcommand{\N}{\mathbb{N}}
\newcommand{\T}{\mathbb{T}}

\newcommand{\C}{\mathbb{C}}

\DeclareMathOperator*{\aut}{Aut}

\def\K{\mathcal{K}}
\def\P{\mathcal{P}}

\def\ICCP{\mathcal{ICCP}}

\begin{document}

\title{On twisted group C$^*$-algebras associated with FC-hypercentral groups and other related groups}

\author{Erik B\'edos, Tron Omland \vspace{2.5em}}

\date{November 7, 2014}
\maketitle
\renewcommand{\sectionmark}[1]{}

\begin{abstract}
We show that the twisted group C$^*$-algebra associated with a discrete FC-hypercentral group is simple (resp.\ has a unique tracial state) if and only if Kleppner's condition is satisfied. This generalizes a result of J.~Packer for countable nilpotent groups. We also consider a larger class of groups, for which we can show that the corresponding reduced twisted group C$^*$-algebras have a unique tracial state if and only if Kleppner's condition holds.

\vspace{3.5em}

\noindent {\bf MSC 2010}: Primary: 46L55. Secondary: 22D25, 20C25, 46L05.

\vspace{0.5em}

\noindent {\bf Keywords}: twisted group C$^*$-algebra, simplicity, unique trace, FC-hypercentral group.

\end{abstract}

\section{Introduction}

In this article, all groups will be considered as discrete groups. 
Letting $\sigma\colon G\times G\to \T$ denote a normalized two-cocycle on a group $G$, in other words, $\sigma \in Z^2(G, \T)$, we will say that $(G, \sigma)$ is \emph{C$^*$-simple} (resp.\ has \emph{the unique trace property}) whenever the reduced twisted  group C$^*$-algebra $C_r^*(G, \sigma)$ is simple (resp.\ has a unique tracial state). If $G$ is amenable, then we can equally consider the (full) twisted  group C$^*$-algebra $C^*(G, \sigma)$, since $C^*(G, \sigma)$ is canonically isomorphic to $C_r^*(G, \sigma)$ in this case (cf.\ \cite{ZM}). It is well known that a necessary condition for $(G, \sigma)$ to be C$^*$-simple (resp.\ have the unique trace property) is \emph{Kleppner's condition} \cite{Kle}, which says that every nontrivial $\sigma$-regular conjugacy class in $G$ is infinite.
In general, Kleppner's condition is not sufficient for $(G, \sigma)$ to be C$^*$-simple (resp.\ have the unique trace property). However, for certain classes of groups, Kleppner's condition is sufficient for both these properties to hold.
We will therefore say that  a group $G$ belongs to the class $\K_{C^*S}$ (resp. $\K_{UT}$) if, for every $\sigma \in Z^2(G, \T)$, we have that $(G, \sigma)$ is C$^*$-simple (resp.\ has the unique trace property) if and only if  $(G, \sigma)$ satisfies Kleppner's condition.  
Moreover, we will let $\K$ denote the intersection of $\K_{C^*S} $ and $\K_{UT}$, while $\K^{am}$ will denote the subclass of $\K$ consisting of amenable groups. 

\medskip It is a classical  fact that the class of finite groups is contained in $\K^{am}$; see for example \cite{Isa} or \cite{Kle}. J.~Packer \cite{Pa} has shown that all countable nilpotent groups belong to $\K^{am}$ (see also \cite{Sla} for the case of abelian groups).
Large families of non-amenable groups in $\K$ are described in \cite{Bed, Bed2, BC2}. We mention explicitly the class $\P$ introduced in \cite{BC2}, as we will refer to it later: it consists of all PH groups \cite{Pro} and of all groups with property ($P_{\rm com}$) \cite{BCH}. In particular, all weak Powers groups \cite{H} belong to $\P$, and the class $\P$ contains many amalgamated free products, HNN-extensions, hyperbolic groups, Coxeter groups, and lattices in semisimple Lie groups. Any group belonging to $\P$ is ICC (meaning that all of its nontrivial conjugacy classes are infinite), so Kleppner's condition is trivially satisfied for any $\sigma \in Z^2(G,\T)$ when $G$ lies in $\P$.

\medskip Our first aim in this paper is to show that the class $\K^{am}$ contains all \emph{FC-hypercentral} groups (cf.\ Theorem~\ref{K}). The class of FC-hypercentral groups \cite{Rob} is quite large: it contains for instance all groups that have only finite conjugacy classes (usually called FC-groups); moreover, it contains all virtually nilpotent groups and all FC-nilpotent groups. A simple way to describe that a group is FC-hypercentral is to say that it has no nontrivial ICC quotient group. We will recall the equivalent definitions in Section~\ref{FCH-section}. We just mention here that if a group $G$ is finitely generated, then $G$ is FC-hypercentral if and only if $G$ is virtually nilpotent, if and only if $G$ has polynomial growth. An interesting open problem raised by our result is whether the class $\K^{am}$ coincides with the class of FC-hypercentral groups.

\medskip In our proof of Theorem~\ref{K}, we will use the observation that if a group $G$ is amenable and $(G, \sigma)$ has the unique trace property for some  $\sigma \in Z^2(G,\T)$, then $(G, \sigma)$ is C$^*$-simple. This follows easily from the fact that $C_r^*(G, \sigma)$ has the QTS  property introduced by G.~Murphy \cite{Mur} whenever $G$ is amenable. Thus, the main burden of our proof will be to show that for an FC-hypercentral group, the unique trace property follows from Kleppner's condition. This will be achieved by streamlining and generalizing the proof of the same implication given by J.~Packer \cite{Pa} in the case of a countable nilpotent group. For completeness, we will also give another proof that for an FC-hypercentral group, the C$^*$-simplicity can be deduced from Kleppner's condition, by making use of a deep result of  S.~Echterhoff in \cite{Ech}. 
 
\medskip A consequence of Theorem~\ref{K} is that if $G$ is a countable FC-hypercentral group,  $\sigma \in Z^2(G, \T)$ and $(G,\sigma)$ satisfies Kleppner's condition, then $C^*(G, \sigma) \simeq C_r^*(G, \sigma)$ belongs to the class of separable simple nuclear C$^*$-algebras with a unique tracial state, a class that is of particular interest in the classification program for C$^*$-algebras.

\medskip In the second part of this paper, we consider a larger class of groups and show that it is contained in $\K_{UT}$. To describe this class, we first recall \cite{Rob} that any group $G$ has a canonical normal FC-hypercentral subgroup, $FCH(G)$, called its \emph{FC-hypercenter}. The quotient group $G/FCH(G)$ is an ICC group, that we will denote by $ICC(G)$. We will say that $G$ belongs to the class $\ICCP$ when $ICC(G)$ belongs to the class $\P$ mentioned above. As the trivial group is the only amenable group belonging to $\P$, we have that the class of FC-hypercentral  groups is the intersection of  $\ICCP$ with the class of amenable groups. Our result is that the class $\ICCP$ is contained in $\K_{UT}$ (cf.\ Theorem~\ref{FCH-P}). We believe that $\ICCP$ is also contained in $\K_{C^*S}$, and include a result supporting this conjecture.

\section{Preliminaries}

\subsection{On twisted group \texorpdfstring{C$^*$}{C*}-algebras}

Let $G$ denote a group with identity $e$ and let $ \sigma\colon G\times G \to \T$ denote a normalized two-cocycle
(sometimes called a multiplier) on $G$ with values in the circle group $\T$. We recall that $\sigma$ satisfies 
\begin{gather*}
\sigma(g,h)\sigma(gh,k)=\sigma(h,k)\sigma(g,hk)\, , \\
\sigma(g,e)=\sigma(e,g)=1
\end{gather*}
for all $g,h,k \in G$.

The set of all such two-cocycles will be denoted by $Z^2(G,\T)$, as in \cite{ZM}. The trivial two-cocycle is simply written as 1.
We will use the convention that when $\sigma =1$, we just drop $\sigma$ from all our notation. 

\medskip The \emph{left regular $\sigma$-projective unitary representation $\lambda_{\sigma}$ of $G$ on $B(\ell^2(G))$} is given by
\[
(\lambda_{\sigma}(g)\xi)(h)=\sigma(g,g^{-1}h)\, \xi(g^{-1}h)
\]
for $\xi \in \ell^2(G)$ and $g,h \in G$. Note that
\[
\lambda_{\sigma}(g)\, \delta_h=\sigma(g,h)\, \delta_{gh}
\]
for all $g,h\in G$ (where $\delta_g(h)=1$ if $g=h$ and $\delta_g(h)=0$ otherwise).
The \emph{reduced twisted group C$^*$-algebra} and the \emph{twisted group von~Neumann algebra of $(G,\sigma)$},
$C^*_r(G,\sigma)$ and $W^*(G,\sigma)$ are, respectively, the C$^*$-algebra and the von~Neumann algebra generated by $\lambda_{\sigma}(G)$.
The \emph{(full) twisted group C$^*$-algebra of $(G,\sigma)$}, $C^*(G,\sigma)$,
is the enveloping C$^*$-algebra of the Banach $^*$-algebra $\ell^1(G, \sigma)$,
equipped with the twisted convolution and involution (see \cite{ZM}).

\medskip The canonical tracial state on   $C_r^*(G,\sigma)$ will be denoted by $\tau$;
it is simply given as the restriction to $C_r^*(G,\sigma)$ of the vector state associated with $\delta_e$.
As is well-known, $\tau$ is faithful and satisfies $\tau(\lambda_\sigma(g))=0$ for every $g\neq e$.

\medskip We recall \cite{Kle, Pa, Om} that $g \in G$  is called \emph{$\sigma$-regular} if 
\[
\sigma(g,h)=\sigma(h,g) \text{ for every } h \in G \text{ that commutes with } g\, .
\]
If $g$ is $\sigma$-regular, then $hgh^{-1}$ is $\sigma$-regular for all $h$ in $G$,
so the notion of $\sigma$-regularity makes sense for conjugacy classes in $G$. 

As mentioned in the Introduction,
the pair $(G,\sigma)$ will be said to satisfy \emph{Kleppner's condition} if every nontrivial $\sigma$-regular conjugacy class of $G$ is infinite.
It is known \cite{Kle, Pa, Om} that $(G,\sigma)$ satisfies Kleppner's condition if and only if $W^*(G,\sigma)$ is a factor,
if and only if $C^*_r(G,\sigma)$ has trivial center, if and only if $C^*_r(G,\sigma)$ is prime.
It follows easily from these equivalences that Kleppner's condition is necessary
for $(G,\sigma)$ to be C$^*$-simple (resp.\ to have the unique trace property).
On the other hand, if $G$ is amenable, then 
$C_r^*(G)\simeq C^*(G)$ has a $1$-dimensional $^*$-representation \cite{Pat}.
Hence, if $G$ is a nontrivial amenable ICC group, then $(G,1)$ satisfies Kleppner's condition,
but neither is $(G,1)$ C$^*$-simple, nor does it have the unique trace property.

\bigskip The following lemma, which is a slight adaptation of a technical result due to Carey and Moran (\cite[Lemma~4.1]{CM}),
will be important in the proof of Theorem~\ref{K}.

\begin{lemma}\label{CM}
Let $G$ be a group and assume $\psi$ is a tracial state on $C^*(G)$.
Let $g \to u(g) \in C^*(G)$ denote the canonical embedding of $G$ into $C^*(G)$ and let $\psi_G$ be the function on $G$ given by $\psi_G= \psi \circ u$.
Assume that there exist $h \in G$ and a sequence $\{g_i\}_{i\in \N}$ in $G$ such that 
\begin{equation}\label{CM-eq}
\psi_G\big(g_jh^{-1} g_j^{-1} g_ih g_i^{-1} \big) = 0
\quad\text{for every } i\neq j \text{ in } \N\,.
\end{equation}
 Then $ \psi_G(h)=0$. 

\end{lemma}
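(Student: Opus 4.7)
My plan is to reinterpret the hypothesis as a trace-orthogonality relation among conjugates of $u(h)$ and then squeeze $|\psi_G(h)|^2$ down to $0$ using a Cesàro average together with the Cauchy--Schwarz inequality for the state $\psi$. To begin, I would set $v_i := u(g_i)\,u(h)\,u(g_i)^{-1} \in C^*(G)$ for each $i \in \N$. Since $u$ is a group homomorphism into the unitaries of $C^*(G)$, I have $v_i = u(g_i h g_i^{-1})$ and $v_j^* = u(g_j h^{-1} g_j^{-1})$, so
\[
v_j^*\,v_i \;=\; u\bigl(g_j h^{-1} g_j^{-1}\, g_i h g_i^{-1}\bigr).
\]
Applying $\psi$ to this identity turns the hypothesis \eqref{CM-eq} into $\psi(v_j^*\,v_i) = 0$ for all $i \neq j$ in $\N$, while $v_i^*v_i = 1$ gives $\psi(v_i^*v_i) = 1$ for every $i$. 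Moreover, since $\psi$ is tracial, $\psi(v_i) = \psi(u(g_i)\,u(h)\,u(g_i)^{-1}) = \psi(u(h)) = \psi_G(h)$ for every $i$.

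Next I would consider the averages $w_n := \frac{1}{n}\sum_{i=1}^{n} v_i$ in $C^*(G)$. By linearity of $\psi$ and the previous step, $\psi(w_n) = \psi_G(h)$ for each $n$. Expanding
\[
\psi(w_n^*\,w_n) \;=\; \frac{1}{n^2}\sum_{i,j=1}^{n}\psi(v_j^*\,v_i),
\]
only the $n$ diagonal terms survive by the orthogonality just established, so $\psi(w_n^*\,w_n) = 1/n$. The Cauchy--Schwarz inequality for the state $\psi$, applied to the pair $(1, w_n)$, then yields
\[
|\psi_G(h)|^2 \;=\; |\psi(w_n)|^2 \;\le\; \psi(1)\,\psi(w_n^*\,w_n) \;=\; \frac{1}{n},
\]
and letting $n \to \infty$ forces $\psi_G(h) = 0$, as claimed.

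There is no genuine obstacle in this argument; the substantive content is the observation that the hypothesis says precisely that the unitary conjugates $v_i$ of $u(h)$ are pairwise orthogonal with respect to the GNS inner product $\langle a, b\rangle_\psi = \psi(b^*a)$, at which point averaging them must drive their common $\psi$-expectation to zero. The only minor bookkeeping item is to track adjoints correctly so that the product $v_j^*\,v_i$ recovers exactly the group element appearing in \eqref{CM-eq}.
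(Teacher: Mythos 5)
Your proof is correct and takes essentially the same approach as the paper, just in different packaging: where you apply Cauchy--Schwarz to the Ces\`aro average $w_n$ of the pairwise $\psi$-orthogonal conjugates $v_i$, the paper expands $\psi\big((a_N)^*a_N\big)\ge 0$ for $a_N = I - \overline{\psi_G(h)}\,\sum_{i=1}^N u(g_ihg_i^{-1})$, and both computations yield the identical bound $|\psi_G(h)|^2 \le 1/N$. Traciality is used in the same single place in both arguments, namely to reduce $\psi(v_i)$ (resp.\ the cross terms) to $\psi_G(h)$.
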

\begin{proof} 

\medskip For each $N \in \N$, let $a_N \in C^*(G)$ be defined by $a_N = I - \overline{\psi_G(h)}\, \sum_{i=1}^N u\big(g_ih g_i^{-1} \big)\,.$
Then we have
$$(a_N)^*\, a_N = \Big[I - \psi_G(h)\, \sum_{j=1}^N u\big(g_jh^{-1} g_j^{-1} \big)\Big] \, \Big[I - \overline{\psi_G(h)}\, \sum_{i=1}^N u\big(g_ih g_i^{-1} \big)\Big]$$
$$= I - \overline{\psi_G(h)}\, \sum_{i=1}^N u\big(g_ih g_i^{-1} \big) - \psi_G(h)\, \sum_{j=1}^N u\big(g_jh^{-1} g_j^{-1} \big)
+ |\psi_G(h)|^2\, \sum_{i,j=1}^N u\big( g_jh^{-1} g_j^{-1} g_ih g_i ^{-1} \big)\,.
$$
Using that $\psi$ is a tracial state, we get
$$\psi\big((a_N)^*\, a_N\big) = 1 - 2N\, |\psi_G(h)|^2 + |\psi_G(h)|^2\, \sum_{i,j=1}^N \psi_G\big(g_j h^{-1} g_j^{-1} g_ih g_i ^{-1} \big)$$
$$= 1 - N \,|\psi_G(h)|^2 +\sum_{i,j=1,\, i\neq j}^N \psi_G\big( g_jh^{-1} g_j^{-1} g_ih g_i^{-1} \big)\,.$$
Using \eqref{CM-eq}, we get
$$ 0\leq \psi\big((a_N)^*\, a_N\big) = 1 - N \,|\psi_G(h)|^2\,,$$
hence $|\psi_G(h)| \leq \sqrt{1/N}$. Letting $N\to \infty$, we obtain the desired conclusion.
\end{proof}

\subsection{On the QTS property}
Let $A$ denote a unital C$^*$-algebra.
Following G.~Murphy \cite{Mur}, $A$ is said to have the \emph{QTS property} if,
for each proper (closed two-sided) ideal $J$ of $A$, the quotient $A/J$ admits a tracial state. 
As observed by Murphy, if $A$ has the QTS property, then $A$ is simple if and only if all its tracial states are faithful.
As an immediate consequence, we get:

\begin{theorem}\label{QTS}
Assume that $A$ has the QTS property and a unique tracial state, which is \emph{faithful}. Then $A$ is simple.
\end{theorem}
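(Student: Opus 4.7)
The plan is to argue by contradiction along the lines hinted at by Murphy's observation recalled just before the statement. Suppose $A$ is not simple, so that there exists a proper nonzero closed two-sided ideal $J$ in $A$. The QTS property then supplies a tracial state $\bar\varphi$ on the quotient $A/J$.

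Next, I would pull $\bar\varphi$ back along the canonical surjection $\pi\colon A \to A/J$ to obtain a tracial state $\varphi := \bar\varphi \circ \pi$ on $A$. By construction $\varphi$ vanishes identically on $J$; since $J$ is nonzero, $\varphi$ picks up a nonzero positive element on which it is zero, so $\varphi$ fails to be faithful.

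Finally, I would invoke the uniqueness hypothesis: since $A$ has a unique tracial state, $\varphi$ must coincide with the given faithful tracial state on $A$. This contradicts the non-faithfulness of $\varphi$ established in the previous step, so $A$ must in fact be simple.

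There is no real obstacle here; the argument is essentially a one-liner built out of the definition of QTS combined with faithfulness and uniqueness. The only step that requires a moment of care is checking that the pulled-back tracial state is genuinely not faithful, which is immediate from $\varphi(a^*a) = 0$ for any nonzero $a \in J$.
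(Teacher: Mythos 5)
Your argument is correct and is exactly the unwinding of Murphy's observation that the paper invokes (QTS implies $A$ is simple iff all tracial states are faithful): the pulled-back trace $\bar\varphi\circ\pi$ kills the nonzero ideal $J$, so it cannot be faithful, contradicting uniqueness. This is essentially the same proof the paper has in mind when it calls the theorem an ``immediate consequence.''
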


We recall (cf.\ \cite{HZ}) that a unital C$^*$-algebra is simple with at most one tracial state if and only if it has the Dixmier property.
Hence, the assumptions of Theorem~\ref{QTS} imply that $A$ has the Dixmier property.
In this connection, we remark that N.~Ozawa has recently shown \cite{Oz} that the QTS property may be characterized by a weaker Dixmier type property.  

\medskip The following result will be useful to us: 

 \begin{corollary}\label{GUTS}
Assume that $G$ is amenable and let $\sigma \in Z^2(G,\T)$.
Then $(G,\sigma)$ is C$^*$-simple whenever it has the unique trace property. 
\end{corollary}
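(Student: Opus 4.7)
The plan is a direct application of Theorem~\ref{QTS} to $A = C_r^*(G,\sigma)$. Since $G$ is amenable, the canonical identification $C^*(G,\sigma) \simeq C_r^*(G,\sigma)$ lets us move freely between the full and reduced versions. The canonical trace $\tau$ is always faithful on $C_r^*(G,\sigma)$, and by hypothesis it is the unique tracial state. Hence, once the QTS property is in place, Theorem~\ref{QTS} immediately forces $C_r^*(G,\sigma)$ to be simple, i.e.\ $(G,\sigma)$ is C$^*$-simple.

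Thus the real content is to verify that $C_r^*(G,\sigma) \simeq C^*(G,\sigma)$ has the QTS property whenever $G$ is amenable. I would carry this out by an averaging argument: given a proper ideal $J$, set $B = C^*(G,\sigma)/J$ and let $v(g)$ denote the image of $\lambda_\sigma(g)$ in $B$, so that $g \mapsto v(g)$ is a $\sigma$-projective unitary representation of $G$ in $B$. Fix any state $\phi$ on $B$ and, using an invariant mean $m$ on the amenable group $G$, define
\[
\psi(x) \,=\, m_g\bigl(\phi(v(g)\, x\, v(g)^*)\bigr)\qquad (x \in B).
\]
Then $\psi$ is a state on $B$, and the translation invariance of $m$ yields $\psi(v(h)\, x\, v(h)^*) = \psi(x)$ for every $h \in G$ and $x \in B$. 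Applying this with $x = b\, v(h)$ gives $\psi(v(h)\, b) = \psi(b\, v(h))$; since $\mathrm{span}\{v(g) : g \in G\}$ is dense in $B$, this trace-like relation extends from the generators to all of $B$, showing $\psi$ is a tracial state on $B$.

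The main obstacle, as far as I can see, is purely bookkeeping in the averaging step — verifying that the invariance under the twisted inner automorphisms $\mathrm{Ad}\,v(g)$ really does pass from generators to arbitrary elements and yields a genuine trace (the cocycle factors cancel out nicely because $v(g)\, v(g)^* = I$, so no $\sigma$ enters the invariance identity). Once this is done, QTS for $C_r^*(G,\sigma)$ is established, and the corollary follows from Theorem~\ref{QTS} together with the faithfulness of $\tau$.
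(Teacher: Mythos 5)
Your proposal is correct and follows essentially the same route as the paper: the paper's own proof also establishes the QTS property for $C_r^*(G,\sigma)$ by averaging a state on the quotient over an invariant mean via the twisted inner automorphisms $x \mapsto v(g)\,x\,v(g)^*$, and then invokes Theorem~\ref{QTS} together with the faithfulness of the canonical trace. The only cosmetic difference is that the paper additionally cites the hypertraciality characterization of amenability as a shortcut before giving the same direct averaging argument you describe.
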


\begin{proof} 
It is known (cf.\ \cite{Mur}) that a unital C$^*$-algebra $A$ has the QTS property whenever $A$ is hypertracial (as defined in \cite{Bed3}).
Since $G$ is amenable if and only if $C^*_r(G,\sigma)$ is hypertracial (cf.\ \cite{Bed3}), the assertion follows from Theorem~\ref{QTS}. 
For the ease of the reader, we  sketch a direct proof that $C_r^*(G,\sigma)$ has the QTS property when $G$ is assumed to be amenable. 
Let $J$ be a proper ideal of $C^*_r(G,\sigma)$,
let $\pi$ denote the canonical quotient map from $C^*_r(G,\sigma)$ onto $B= C^*_r(G,\sigma)/J$,
let $\varphi$ denote a state on $B$, and set $v(g) = \pi(\lambda_\sigma(g))$ for each $g\in G$.
For each $x \in B$, define $x_\varphi\in \ell^\infty(G)$ by
\[
x_\varphi(g) = \varphi\big(v(g)\,x\,v(g)^*\big)\quad\text{for each } g\in G\,.
\]
Now, let $m$ be a right-invariant mean on $\ell^\infty(G)$ and define $\psi\colon B\to \C$ by
\[
\psi(x) = m(x_\varphi)\quad\text{for each }x \in B\,.
\]
Then $\psi$ is a state on $B$.
Moreover, as $\big(v(h)x\,v(h)^*\big)_\varphi$ is the right-translate of $x_\varphi$ by $h$ for each $h\in G$,
the invariance of $m$ gives that $\psi\big(v(h)\,xv\,(h)^*\big) = \psi(x)$ for all $h \in G$ and $x \in B$.
As $\{ v(h)\mid h \in G\}$ generates $B$ as a C$^*$-algebra, it follows readily that $\psi$ is tracial.
\end{proof}

Murphy also shows that a unital C$^*$-algebra $A$ has the QTS property whenever $A$ is exact \cite{BrOz} and has stable rank one
(i.e., the invertible elements of $A$ are dense in $A$).
Now, it follows from \cite{Ex} that $C^*_r(G,\sigma)$ is exact whenever $G$ is exact. Hence, another corollary of Theorem~\ref{QTS} is:

\begin{corollary}\label{GUTS2}
Let $\sigma \in Z^2(G,\T)$. Assume that $G$ is exact and $C_r^*(G, \sigma)$ has stable rank one.
Then $(G,\sigma)$ is C$^*$-simple whenever it has the unique trace property.
\end{corollary}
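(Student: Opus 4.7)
The plan is to assemble the corollary directly from the three ingredients explicitly laid out in the paragraph preceding the statement, using Theorem~\ref{QTS} as the final step. The argument should require no new calculation, only a careful bookkeeping of which hypothesis feeds which conclusion.

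First I would note that exactness of $G$ transfers to the reduced twisted group C$^*$-algebra: by the result cited as \cite{Ex}, $C_r^*(G,\sigma)$ is exact whenever $G$ is exact. Combined with the standing assumption that $C_r^*(G,\sigma)$ has stable rank one, Murphy's criterion (recalled just above the corollary) then gives that $C_r^*(G,\sigma)$ has the QTS property. This disposes of the structural hypothesis of Theorem~\ref{QTS}.

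Next I would verify that the remaining hypotheses of Theorem~\ref{QTS} hold once the unique trace property is assumed. Recall from the preliminaries that the canonical tracial state $\tau$ on $C_r^*(G,\sigma)$ is always faithful. Therefore, if $(G,\sigma)$ has the unique trace property, then $\tau$ is the only tracial state on $C_r^*(G,\sigma)$, and it is faithful. Invoking Theorem~\ref{QTS} now yields that $C_r^*(G,\sigma)$ is simple, which is exactly the statement that $(G,\sigma)$ is C$^*$-simple.

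No step here should present a genuine obstacle: the only subtlety is to make sure that the faithfulness of $\tau$, which holds unconditionally, is explicitly invoked so that Theorem~\ref{QTS} may be applied. The proof is thus essentially a three-line deduction, paralleling the proof of Corollary~\ref{GUTS}, with amenability and hypertraciality replaced by exactness and stable rank one.
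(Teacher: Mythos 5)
Your proof is correct and follows exactly the route the paper intends: Exel's result plus stable rank one gives the QTS property via Murphy's criterion, and the faithfulness of the canonical trace $\tau$ (which is the unique trace by hypothesis) lets you invoke Theorem~\ref{QTS}. This matches the paper's (implicit) argument, so there is nothing to add.
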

Not much seems to be known about conditions ensuring that  $C_r^*(G, \sigma)$ has stable rank one. We plan to investigate this in a separate paper.

\subsection{On FC-hypercentral groups}\label{FCH-section}

Let $G$ be a group. We recall that the \emph{FC-center of $G$} is given by
\[
FC(G)=\{ g\in G\mid \text{the conjugacy class of } g \text{ is finite}\}\,.
\]
The FC-center of $G$ is a normal subgroup of $G$, which is trivial if and only if $G$ is ICC. 
The group $G$ is said to be an \emph{FC-group} when $FC(G)=G$.

The \emph{upper (ascending) FC-central series} $\{F_\alpha\}_\alpha$ of $G$ is a normal series of subgroups of $G$ indexed by the ordinal numbers.
It is defined as follows (cf.\ \cite[Section~4.3]{Rob}):

\smallskip We set $F_0=\{e\}$, $F_\alpha/F_\beta =FC(G/F_\beta)$ if $\alpha=\beta+1$,
and $F_\alpha=\bigcup_{\beta<\alpha}F_\beta$ when $\alpha$ is a limit ordinal number.
This series eventually stabilizes and $FCH(G)=\lim_\alpha F_\alpha=\bigcup_\alpha F_\alpha$ is called the \emph{FC-hypercenter of} $G$.
Note that $FCH(G)$ is a normal subgroup of $G$ since it is a union of normal subgroups.

Since $F_1=FC(G)$, we have that $FCH(G)$ is trivial if and only if $FC(G)$ is trivial if and only if $G$ is ICC.

\medskip A group $G$ is called \emph{FC-hypercentral} \cite{Rob} when $FCH(G)=G$.
If the upper FC-central series  stabilizes to $G$ after a finite number of steps, then $G$ is called \emph{FC-nilpotent}.
For example, $G$ is FC-nilpotent whenever $G$ is virtually nilpotent (i.e., it contains a nilpotent subgroup of finite index). If $G$ is  finitely generated and FC-hypercentral, then $G$ is FC-nilpotent; further, if $G$ is finitely generated and FC-nilpotent, then $G$ is a finite extension of finitely generated nilpotent subgroup. (See \cite[Theorem 2 and its proof]{ML}).

\medskip As observed by S.~Echterhoff in \cite{Ech}, it follows that FC-hypercentral groups have polynomial growth and are therefore amenable. Moreover, a deep result proved by Echterhoff is that $G$ is FC-hypercentral if and only if $G$ is amenable and every prime ideal of $C^*(G)$ is maximal.
This generalizes an earlier result of Moore and Rosenberg \cite{MR}, which says that any countable amenable T$_1$-group is FC-hypercentral.
(We recall that $G$ is called a \emph{T$_1$-group}
when every primitive ideal of $C^*(G)$ is maximal).

\medskip 
To sum up, consider the following conditions for a group $G$:
\begin{itemize}\itemsep0pt
\item[(i)] $G$ is virtually nilpotent. 
\item[(ii)] $G$ is FC-nilpotent.
\item[(iii)] $G$ is FC-hypercentral.
\item[(iv)] $G$ is an amenable T$_1$-group.
\item[(v)] $G$ has polynomial growth.
\end{itemize}
In general, we have \,(i) $\Rightarrow$ (ii) $\Rightarrow$ (iii) $\Rightarrow$ (iv), and (iii) $\Rightarrow$ (v). 
If $G$ is countable, then (iii) $\Leftrightarrow$ (iv).
(To our knowledge, it is open whether (iv) implies (iii) in general;
it seems also to be unknown whether all T$_1$-groups have polynomial growth.)
Finally, for a finitely generated group, all the five conditions are equivalent,
the implication (v) $\Rightarrow$ (i) being a famous result due to M.~Gromov \cite{G}.

\medskip Another condition one might consider is:
\begin{itemize}
\item[(vi)] $G$ is elementary amenable with subexponential growth.
\end{itemize}
Then we have (v) $\Rightarrow$ (vi). Indeed, Gromov's result gives 
that any finitely generated subgroup of a group with polynomial growth must be virtually nilpotent,
and thus elementary amenable. Since a group is a direct limit of its finitely generated subgroups,
this means that a group with polynomial growth is elementary amenable. 

If $G$ is a finitely generated group, we also have (vi) $\Rightarrow$ (v), hence conditions (i)-(vi) are all equivalent in this case. This assertion is an immediate consequence of a result due to C. Chou \cite{Cho}, which says that a finitely generated elementary amenable group have either polynomial growth or exponential growth. 

\medskip In \cite{Jaw}, W.~Jaworski defines a group $G$ to be \emph{identity excluding} if the only irreducible unitary representation of $G$
which weakly contains the $1$-dimensional identity representation is the $1$-dimensional identity representation itself.
An interesting result in our context is \cite[Theorem~4.5]{Jaw},
which says that a countable group is FC-hypercentral if and only if it is amenable and identity excluding.


\medskip The FC-hypercenter of of a group $G$ may be described as the smallest normal subgroup of $G$ that produces an ICC quotient group.
This fact is mentioned without proof in \cite[Remark~4.1]{Jaw}. For completeness, we give a proof of this useful characterization.

\begin{proposition}\label{ICC-quotient}
Let $G$ be a group. Then the quotient group $G/FCH(G)$ is ICC.
Moreover, if $N$ is a normal subgroup of $G$ such that $G/N$ is ICC, then $FCH(G) \subset N$.
\end{proposition}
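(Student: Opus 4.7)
My plan is to handle the two assertions separately, using transfinite induction on the upper FC-central series $\{F_\alpha\}_\alpha$ for the second one.

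For the first assertion, I would simply unpack the definition of the FC-hypercenter. Since the series $\{F_\alpha\}$ stabilizes, there exists an ordinal $\gamma$ with $F_\gamma = F_{\gamma+1} = FCH(G)$. By construction, $F_{\gamma+1}/F_\gamma = FC(G/F_\gamma)$, so $FC(G/F_\gamma)$ is the trivial subgroup. As observed in the excerpt right after the definition of the series, having trivial FC-center is precisely what it means for a group to be ICC, which gives that $G/FCH(G)$ is ICC.

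For the second assertion, I would prove by transfinite induction on $\alpha$ that $F_\alpha \subseteq N$ for every ordinal $\alpha$; taking $\alpha$ large enough so that $F_\alpha = FCH(G)$ then finishes the proof. The base case $F_0 = \{e\} \subseteq N$ is trivial, and the limit stage is immediate from $F_\alpha = \bigcup_{\beta<\alpha} F_\beta$. The main content is the successor stage: assuming $F_\beta \subseteq N$, I want to show $F_{\beta+1} \subseteq N$. Here I would use the elementary fact that if $\pi\colon H\to K$ is a surjective group homomorphism, then $\pi(FC(H)) \subseteq FC(K)$, because the image of a finite conjugacy class is a finite conjugacy class. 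Applied to the canonical surjection $\pi\colon G/F_\beta \to G/N$ (which exists since $F_\beta \subseteq N$), this gives
\[
\pi\bigl(FC(G/F_\beta)\bigr) \subseteq FC(G/N) = \{e\},
\]
the latter equality holding since $G/N$ is ICC. Hence $F_{\beta+1}/F_\beta = FC(G/F_\beta) \subseteq \ker \pi = N/F_\beta$, which gives $F_{\beta+1} \subseteq N$ as desired.

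I do not anticipate any serious obstacle: the only small point to check carefully is that homomorphic images of FC elements are FC elements, which is a one-line observation. Everything else is formal bookkeeping with the transfinite series.
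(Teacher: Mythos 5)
Your proposal is correct and follows essentially the same route as the paper: both arguments prove $F_\alpha\subseteq N$ by transfinite induction, using at the successor stage that the surjection $G/F_\beta\to G/N$ carries $FC(G/F_\beta)=F_{\alpha}/F_\beta$ into $FC(G/N)=\{e\}$, and both obtain the first assertion directly from the stabilization of the series. No gaps.
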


\begin{proof}
Since the upper FC-central series of $G$ stabilizes at $FCH(G)$, the first assertion is clear.
The second assertion also follows from the construction. Assume that $N$ is a normal subgroup of $G$ such that $G/N$ is ICC.
Then the quotient map $G \to G/N$ sends $FC(G)$ to a subgroup of $FC(G/N)=\{e\}$, so one has $F_1 = FC(G) \subset N$.

Next, suppose that $\alpha$ and $\beta$ are ordinals such that $\alpha=\beta+1$ and $F_\beta \subset N$.
Then the quotient map $G/F_\beta \to (G/F_\beta)/(N/F_\beta)=G/N$ sends $FC(G/F_\beta)=F_\alpha/F_\beta$ to the identity, that is, $F_\alpha \subset N$.

Finally, if $\alpha$ is a limit ordinal and $F_\beta \subset N$ for all $\beta < \alpha$,
then $F_\alpha = \bigcup_{\beta<\alpha}F_\beta \subset N$.
Hence, $F_\alpha \subset N$ for all ordinals $\alpha$, so $FCH(G) = \bigcup_\alpha F_\alpha \subset N$.
\end{proof}

\begin{corollary}
A group $G$ is FC-hypercentral if and only if $G$ has no nontrivial ICC quotients,
that is, if and only if $FC(G/N)$ is nontrivial for every proper normal subgroup $N$ of $G$.
\end{corollary}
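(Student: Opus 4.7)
The plan is to derive this corollary as an essentially immediate consequence of the preceding Proposition~\ref{ICC-quotient}, together with the definition of ICC in terms of the FC-center.

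First I would dispose of the internal equivalence in the statement. Recall that a group $H$ is ICC precisely when $FC(H) = \{e\}$. Hence for a proper normal subgroup $N$ of $G$, the quotient $G/N$ is a nontrivial ICC quotient if and only if $FC(G/N)$ is trivial. So the condition ``$G$ has no nontrivial ICC quotient'' is literally the same as ``$FC(G/N)$ is nontrivial for every proper normal subgroup $N$ of $G$'', and no further argument is needed for this half.

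For the forward direction, I would assume $G$ is FC-hypercentral, so $FCH(G) = G$. If $N \trianglelefteq G$ is such that $G/N$ is ICC, the second assertion of Proposition~\ref{ICC-quotient} forces $FCH(G) \subset N$, hence $G = N$, so $G/N$ is trivial. Conversely, assume $G$ has no nontrivial ICC quotient. Applying the first assertion of Proposition~\ref{ICC-quotient} to the normal subgroup $FCH(G)$, the quotient $G/FCH(G)$ is ICC, and by hypothesis it must therefore be trivial, giving $FCH(G) = G$.

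There is no real obstacle here; the substantive work has already been carried out in Proposition~\ref{ICC-quotient}, and the corollary is just the contrapositive packaging of the two assertions of that proposition combined with the tautological reformulation of ICC via the FC-center.
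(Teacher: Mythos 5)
Your proof is correct and is precisely the argument the paper intends: the corollary is stated without proof as an immediate consequence of Proposition~\ref{ICC-quotient}, and your two applications of that proposition (together with the stated fact that a group is ICC exactly when its FC-center is trivial) fill in the details in the expected way.
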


We also mention some permanence properties of FC-hypercentrality.

\medskip We will say that $G$ is an \emph{FC-central extension} of a group $K$ if there is a short exact sequence
\[
e \longrightarrow H \longrightarrow G \longrightarrow K \longrightarrow e
\]
such that $H \subset FC(G)$.
In particular, $H$ must be an FC-group.
Note that FC-central extensions generalize both central and finite extensions.
The class of FC-nilpotent groups forms the smallest class that is closed under FC-central extensions.

\medskip Similarly, we will say that $G$ is an \emph{FC-hypercentral extension} of $K$ if $H \subset FCH(G)$.
The class of FC-hypercentral groups is closed under FC-hypercentral extensions:

\begin{proposition}\label{FC-central-extension}
Suppose a group $G$ is an FC-hypercentral extension of a group $K$.
Then $G$ is FC-hypercentral if and only if $K$ is FC-hypercentral.
\end{proposition}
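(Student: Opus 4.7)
The plan is to invoke the characterization of FC-hypercentrality given in the corollary just above: a group is FC-hypercentral if and only if it has no nontrivial ICC quotient. With that tool, both implications become short.

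For the forward direction, I would simply observe that FC-hypercentrality passes to arbitrary quotients: if $G$ has no nontrivial ICC quotient, then neither does $G/H = K$, since any ICC quotient of $K$ would automatically be an ICC quotient of $G$. This step does not even use the hypothesis $H \subset FCH(G)$.

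For the reverse direction, I would argue by contradiction. Suppose $K$ is FC-hypercentral but $G$ is not, so by the corollary there exists a proper normal subgroup $N$ of $G$ with $G/N$ a nontrivial ICC group. Here the hypothesis $H \subset FCH(G)$ enters: Proposition~\ref{ICC-quotient} applied to $G/N$ forces $FCH(G) \subset N$, and therefore $H \subset N$. Consequently, $G/N$ factors as $(G/H)/(N/H) = K/(N/H)$, exhibiting it as a nontrivial ICC quotient of $K$. This contradicts the FC-hypercentrality of $K$, again via the corollary, and completes the proof.

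There is no serious obstacle here; the only subtle point is making sure the two applications of the ``no nontrivial ICC quotient'' characterization are used in the right direction, and that the hypothesis $H \subset FCH(G)$ is invoked exactly once (namely to pull the containment $H \subset N$ out of the stronger containment $FCH(G) \subset N$ supplied by Proposition~\ref{ICC-quotient}).
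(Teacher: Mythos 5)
Your proof is correct and follows essentially the same route as the paper: both reduce the statement to the characterization of FC-hypercentrality via the absence of nontrivial ICC quotients, pull back ICC quotients of $K$ to $G$ in one direction, and in the other direction use Proposition~\ref{ICC-quotient} to get $FCH(G)\subset N$, hence $H\subset N$, so that $G/N\cong K/(N/H)$. No substantive difference.
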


\begin{proof}
Let $q \colon G \to K$ denote the canonical surjection.
According to Lemma~\ref{ICC-quotient}, we have to show that $G$ has a nontrivial ICC quotient if and only if $K$ has a nontrivial ICC quotient.

First, let $N$ be a proper normal subgroup of $K$ such that $K/N$ is ICC.
Then $q^{-1}(N)$ is a proper normal subgroup of $G$ that contains $H$ and $G/q^{-1}(N) \cong (G/H)/(q^{-1}(N)/H) \cong K/N$ is ICC.

For the converse implication, let $N$ be a proper normal subgroup of $G$ such that $G/N$ is ICC.
Then $N$ must contain $FCH(G)$, in particular, $N$ must contain $H$.
Hence, $q(N) \cong N/H$ is a proper normal subgroup of $K$ and $G/N \cong (G/H)/(N/H) \cong K/q(N)$, which is ICC.
\end{proof}

It follows from Proposition~\ref{FC-central-extension} that the class of FC-hypercentral groups is closed under quotients and direct products.
We also have:

\begin{proposition}\label{FCH-subgroup}
Suppose $H$ is a subgroup of $G$.
\begin{itemize}\itemsep0pt \parsep0pt
\item[a)] If $G$ is FC-hypercentral, then $H$ is FC-hypercentral.
\item[b)] If $H$ is FC-hypercentral and of finite index in $G$, then $G$ is FC-hypercentral.
\end{itemize}
\end{proposition}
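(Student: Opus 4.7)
For part (a), the natural approach is a transfinite induction on $\alpha$ establishing the inclusion $F_\alpha(G) \cap H \subset F_\alpha(H)$, where $\{F_\alpha(G)\}$ and $\{F_\alpha(H)\}$ denote the upper FC-central series of $G$ and $H$, respectively. The $\alpha = 0$ and limit cases are immediate from the definition of the series. For the successor step, if $g \in F_{\beta+1}(G) \cap H$, then the $G/F_\beta(G)$-conjugacy class of $gF_\beta(G)$ is finite, so the cosets $hgh^{-1}F_\beta(G)$ with $h \in H$ take only finitely many values. The inductive hypothesis will then force any two $H$-conjugates of $g$ lying in the same $F_\beta(G)$-coset to lie in the same $F_\beta(H)$-coset, so the $H/F_\beta(H)$-conjugacy class of $gF_\beta(H)$ is finite, i.e., $g \in F_{\beta+1}(H)$. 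Since $G = F_\alpha(G)$ for some $\alpha$, this yields $H = F_\alpha(H)$, so $H$ is FC-hypercentral.

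For part (b), I would first replace $H$ by its normal core $H_0 = \bigcap_{g \in G} gHg^{-1}$, which is a normal subgroup of $G$ of finite index; by part (a) it remains FC-hypercentral, so I may assume $H \trianglelefteq G$ with $[G:H] < \infty$. The plan is then to use the characterization of FC-hypercentrality via the absence of nontrivial ICC quotients (the corollary of Proposition~\ref{ICC-quotient}). Suppose, for contradiction, that $G/N$ is a nontrivial ICC quotient of $G$. Then $HN/N$ has finite index in $G/N$ and, being isomorphic to $H/(H \cap N)$, it is FC-hypercentral. A short centralizer-index computation shows that a finite-index subgroup of an ICC group has trivial FC-center, so $FC(HN/N) = \{e\}$; since $HN/N$ is FC-hypercentral, an easy induction on its series forces $HN/N = \{e\}$. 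Hence $H \subset N$, so $G/N$ is a quotient of the finite group $G/H$; but a nontrivial finite group is never ICC, contradicting the assumption.

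The main obstacle I expect is framing the induction in (a) correctly: the tempting inclusion $F_\beta(H) \subset F_\beta(G)$ is \emph{false} in general (for instance $H = \Z$ inside a group with trivial FC-center gives $F_1(H) = H \not\subset \{e\} = F_1(G)$), so one cannot compare the two series via a direct containment. One has to work instead with the slightly asymmetric statement $F_\alpha(G) \cap H \subset F_\alpha(H)$ and carry out the successor step by comparing $F_\beta(G)$-cosets against $F_\beta(H)$-cosets through the inductive hypothesis. Once this is in place, both parts follow smoothly.
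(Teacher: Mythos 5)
Your proof is correct and follows essentially the same route as the paper's (which dismisses both steps as ``routine''): part (a) is exactly the transfinite induction establishing $F_\alpha(G)\cap H\subset F_\alpha(H)$, and part (b) rests on the same centralizer computation showing that a finite-index subgroup of an ICC group has trivial FC-center. The only cosmetic differences are that the paper concludes (b) contrapositively by exhibiting $H/(H\cap N)\cong HN/N$ directly as a nontrivial ICC quotient of $H$, rather than deriving a contradiction from $HN/N$ being forced to be trivial, and that your preliminary passage to the normal core is unnecessary, since the second isomorphism theorem only requires $N$ to be normal in $G$.
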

\begin{proof}
For $a)$ we have that $FC(G) \cap H \subset FC(H)$ for any $G$,
and a routine induction argument shows that $FC_\alpha(G) \cap H \subset FC_\alpha(H)$ for all ordinals.
Hence, $FCH(H) \cap H \subset FCH(H)$, so if $G$ is FC-hypercentral, then so is $H$.

\medskip To prove $b)$, assume that $G$ is not FC-hypercentral.
Then there is a proper normal subgroup $N$ of $G$ such that $G/N$ is ICC.
Another routine argument gives that $H \cap N$ is a proper normal subgroup of $H$ and $H/(H \cap N)$ is ICC.
Hence, $H$ is not FC-hypercentral.
\end{proof}

\section{On \texorpdfstring{C$^*$}{C*}-simplicity and the unique trace property for FC-hypercentral groups} 

This section is mainly devoted to the proof of the following result:

\begin{theorem}\label{K} Assume that $G$ is an FC-hypercentral group and let $\sigma \in Z^2(G,\T)$. 

\medskip \noindent Then the following properties are equivalent: 
\begin{itemize}
\item[(i)] $(G,\sigma)$ satisfies Kleppner's condition;
\item[(ii)] $(G,\sigma)$ is C$^*$-simple;
\item[(iii)] $(G,\sigma)$ has the unique trace property,
\end{itemize}

\noindent This means the class of FC-hypercentral groups is contained in the class $\K^{am}$.
\end{theorem}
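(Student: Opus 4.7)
The plan is to establish the three equivalences by combining the standard necessity of Kleppner's condition, an amenability-based shortcut, and a transfinite induction along the upper FC-central series that streamlines Packer's argument from the countable nilpotent case. The implications (ii)$\Rightarrow$(i) and (iii)$\Rightarrow$(i) follow from the characterizations of Kleppner's condition recalled in the preliminaries (factoriality/primeness/triviality of the center). For (iii)$\Rightarrow$(ii), I would invoke that FC-hypercentral groups are amenable (as noted in Section~\ref{FCH-section}) and apply Corollary~\ref{GUTS}: amenability delivers the QTS property, the unique trace is automatically faithful, and Theorem~\ref{QTS} then yields C$^*$-simplicity.

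The real work is (i)$\Rightarrow$(iii). Fix a tracial state $\psi$ on $C^*_r(G,\sigma)\simeq C^*(G,\sigma)$ and set $\tilde\tau(g)=\psi(\lambda_\sigma(g))$; the goal is $\tilde\tau(h)=0$ for every $h\neq e$, which forces $\psi=\tau$. I would prove by transfinite induction the statement $P(\alpha)$: $\tilde\tau$ vanishes on $F_\alpha\setminus\{e\}$. Limit ordinals are immediate from the defining union. For a successor step $\beta\to\beta+1$, pick $h\in F_{\beta+1}\setminus F_\beta$. If $h$ is not $\sigma$-regular, choose $k$ commuting with $h$ with $\sigma(k,h)\neq\sigma(h,k)$; then $\lambda_\sigma(k)\lambda_\sigma(h)\lambda_\sigma(k)^*$ equals a non-trivial unimodular scalar times $\lambda_\sigma(h)$, and the trace property immediately forces $\tilde\tau(h)=0$. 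If $h$ is $\sigma$-regular, Kleppner's condition gives an infinite $G$-conjugacy class for $h$, while $\bar h\in FC(G/F_\beta)=F_{\beta+1}/F_\beta$ has only finitely many conjugates in $G/F_\beta$. Pigeonhole thus yields a sequence $\{g_i\}$ in $G$ with $g_ihg_i^{-1}$ pairwise distinct but all congruent modulo $F_\beta$, so for every $i\neq j$ the element $g_jh^{-1}g_j^{-1}g_ihg_i^{-1}$ sits in $F_\beta\setminus\{e\}$ and is therefore killed by $\tilde\tau$ by the inductive hypothesis. Feeding this into the twisted analogue of the computation underlying Lemma~\ref{CM} — expanding $\psi(b_N^*b_N)\geq 0$ for $b_N=I-\overline{\tilde\tau(h)}\sum_{i=1}^N\lambda_\sigma(g_i)\lambda_\sigma(h)\lambda_\sigma(g_i)^*$ and noting that each cross term is a unimodular multiple of $\lambda_\sigma(g_jh^{-1}g_j^{-1}g_ihg_i^{-1})$ — reduces the inequality to $0\leq 1-N|\tilde\tau(h)|^2$, and letting $N\to\infty$ gives $\tilde\tau(h)=0$. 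The base $\beta=0$ is automatic: a $\sigma$-regular $h\in F_1=FC(G)\setminus\{e\}$ would contradict Kleppner's condition, leaving only the non-$\sigma$-regular subcase already handled.

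The main obstacle I anticipate is the careful bookkeeping of the cocycle scalars when transposing the Carey–Moran trick from $C^*(G)$ (where Lemma~\ref{CM} is stated) to the twisted $\lambda_\sigma$-setting: one must verify that every scalar produced by successive applications of the cocycle identity is unimodular, and that trace-invariance still converts the diagonal $N$ terms and the $i\neq j$ terms into exactly the clean estimate above. A smaller but real point is to justify the pigeonhole step so that it really supplies infinitely many pairwise distinct conjugates of $h$ lying in one coset of $F_\beta$. Once those technicalities are settled, the recursion runs to the end because $G=\bigcup_\alpha F_\alpha=FCH(G)$ by hypothesis, so $P(\alpha)$ for sufficiently large $\alpha$ says $\tilde\tau$ vanishes on $G\setminus\{e\}$ and thus $\psi=\tau$, proving (iii).
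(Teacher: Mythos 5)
Your proof is correct, and its overall architecture --- the scalar trick $\lambda_\sigma(k)\lambda_\sigma(h)\lambda_\sigma(k)^*=\widetilde\sigma(k,h)\lambda_\sigma(h)$ for non-$\sigma$-regular elements, a transfinite induction along the upper FC-central series with a Carey--Moran estimate at successor steps, and Corollary~\ref{GUTS} for (iii)$\Rightarrow$(ii) --- is the same as the paper's. The one genuine difference is how the Carey--Moran lemma is deployed. The paper keeps Lemma~\ref{CM} in its untwisted form and applies it to the ordinary group C$^*$-algebra of the central extension $G^\sigma=G\times D^\sigma$ with product $(g,z)(h,w)=(gh,\sigma(g,h)zw)$, pulling a tracial state on $C^*_r(G,\sigma)$ back to $C^*(G^\sigma)$ via $u_\sigma(g,z)\mapsto z\,\lambda_\sigma(g)$ (Lemmas~\ref{character} and~\ref{FCH-trace}). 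You instead run the $\psi(b_N^*b_N)\geq 0$ computation directly with the twisted unitaries $\lambda_\sigma(g_i)\lambda_\sigma(h)\lambda_\sigma(g_i)^*$. This does go through, for exactly the reasons you flag: each of these elements is a genuine unitary, so the diagonal terms are exactly $I$ and the trace-conjugation identity $\psi(\lambda_\sigma(g_i)\lambda_\sigma(h)\lambda_\sigma(g_i)^*)=\psi(\lambda_\sigma(h))$ needs no cocycle correction, while each $i\neq j$ cross term is a unimodular multiple of $\lambda_\sigma\big(g_jh^{-1}g_j^{-1}g_ihg_i^{-1}\big)$ with $g_jh^{-1}g_j^{-1}g_ihg_i^{-1}\in F_\beta\setminus\{e\}$, hence is annihilated by the inductive hypothesis irrespective of its phase. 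Your version is marginally more self-contained; the paper's detour through $G^\sigma$ buys a verbatim citation of the Carey--Moran lemma and also sets up its alternative proof of (i)$\Rightarrow$(ii) via Echterhoff's theorem applied to $C^*(G^\sigma)$. Two cosmetic points: in the successor step your case split on $\sigma$-regularity is unnecessary, since for $\beta\geq 1$ any $h\notin F_\beta\supseteq FC(G)$ automatically has an infinite conjugacy class (this is all the paper uses there, reserving Kleppner's condition for the level of $FC(G)$); and the identification $C^*_r(G,\sigma)\simeq C^*(G,\sigma)$ is available by amenability but not needed, as the whole argument runs inside $C^*_r(G,\sigma)$.
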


To simplify notation, we set $A=C_r^*(G,\sigma)$ and let $A_{FC}$ denote the C$^*$-subalgebra of $A$ generated by $\{\lambda_\sigma(g)\mid g\in FC(G)\}$.  

\smallskip A simple computation gives that for all $g,h \in G$, we have
\[
\lambda_\sigma(g)\,\lambda_\sigma(h)\,\lambda_\sigma(g) ^* = \widetilde{\sigma}(g,h)\, \lambda_\sigma(ghg^{-1})\,,
\]
where
\[
\widetilde{\sigma}(g,h) = \sigma(g,h)\, \overline{\sigma(ghg^{-1},g)}\,.
\] 

\medskip Hence, we get an action $\gamma$ of $G$ on $A_{FC}$, given by
\[
\gamma_g(a) = \lambda_\sigma(g) \, a \, \lambda_\sigma(g) ^*\, \quad\text{for all } g\in G \text{ and } a \in A_{FC}\,.
\]
We will let $\tau_{FC}$ denote the tracial state on $A_{FC}$ obtained by restricting the canonical tracial state $\tau$ on $A$ to $A_{FC}$.
Clearly,  $\tau_{FC}$ is $\gamma$-invariant.

\begin{proposition} \label{gamma} The following conditions are equivalent:
\begin{itemize}
\item[(i)] $(G,\sigma)$ satisfies Kleppner's condition,
\item[(ii)] $\tau_{FC}$ is the unique $\gamma$-invariant tracial state of $A_{FC}$.
\end{itemize}

\end{proposition}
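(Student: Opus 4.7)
The plan is to prove the two directions separately. Direction $(i) \Rightarrow (ii)$ is a short computation; direction $(ii) \Rightarrow (i)$ requires exhibiting an extra $\gamma$-invariant tracial state on $A_{FC}$ when Kleppner's condition fails.

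\emph{$(i) \Rightarrow (ii)$.} Let $\psi$ be any $\gamma$-invariant tracial state on $A_{FC}$ and set $\phi(g) := \psi(\lambda_\sigma(g))$ for $g \in FC(G)$. My aim is to show that $\phi = \delta_e$, which by linearity and density will force $\psi = \tau_{FC}$. Fix $g \in FC(G)$ with $g \neq e$. For every $h \in C_G(g)$, the $\gamma$-invariance of $\psi$ applied to $\lambda_\sigma(g)$ gives $\phi(g) = \widetilde{\sigma}(h,g)^{-1}\phi(g)$. A short cocycle computation shows that $\widetilde{\sigma}(h,g) = \sigma(h,g)\,\overline{\sigma(g,h)}$ whenever $h$ and $g$ commute, so if $\phi(g) \neq 0$ then $\sigma(h,g) = \sigma(g,h)$ for all $h \in C_G(g)$, i.e., $g$ is $\sigma$-regular. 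Since $g \in FC(G)$ has a finite conjugacy class, this contradicts Kleppner's condition.

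\emph{$(ii) \Rightarrow (i)$ (contrapositive).} Suppose Kleppner's condition fails, and pick $g_0 \neq e$ whose $G$-conjugacy class $C$ is finite and $\sigma$-regular. Necessarily $C \subset FC(G)$. The key step is to exhibit a nonscalar element of $Z(A) \cap A_{FC}$ supported on $C$: for each $g \in C$, choose $h_g \in G$ with $h_g g_0 h_g^{-1} = g$, set $c_g := \widetilde{\sigma}(h_g, g_0)$, and let $z := \sum_{g \in C} c_g\, \lambda_\sigma(g) \in A_{FC}$. Independence of $c_g$ from the choice of $h_g$ reduces to showing $\widetilde{\sigma}(h_1, g_0) = \widetilde{\sigma}(h_2, g_0)$ whenever $h_2^{-1}h_1 \in C_G(g_0)$; this is a short cocycle manipulation using the $\sigma$-regularity of $g_0$. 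The centrality condition $\gamma_h(z) = z$ for every $h \in G$ translates to the relation $c_{hgh^{-1}} = c_g\, \widetilde{\sigma}(h, g)$, which follows from the identity $\widetilde{\sigma}(hh', g_0) = \widetilde{\sigma}(h, h' g_0 {h'}^{-1})\, \widetilde{\sigma}(h', g_0)$, itself a consequence of $\gamma$ being a genuine (rather than twisted) $G$-action.

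With $z$ in hand, note that $\tau(z) = 0$ (since $e \notin C$) while $z \neq 0$, so $z$ is not a scalar; hence at least one of $z + z^*$ or $i(z - z^*)$ is a nonscalar self-adjoint element $x \in Z(A) \cap A_{FC}$, whose spectrum therefore contains at least two points. Using continuous functional calculus, I pick a nonconstant continuous $f \colon \mathrm{Spec}(x) \to [0, \infty)$ with $\tau(f(x)) = 1$, and define $\psi(a) := \tau(f(x)\, a)$ for $a \in A_{FC}$. Centrality of $f(x)$ in $A$ makes $\psi$ automatically tracial and $\gamma$-invariant; and $\psi \neq \tau_{FC}$ because evaluating $\psi - \tau_{FC}$ at $(f(x) - I)^* \in A_{FC}$ yields $\tau\bigl((f(x) - I)(f(x) - I)^*\bigr) > 0$ by faithfulness of $\tau$. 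I expect the main obstacle to be setting up the central element $z$: the two cocycle identities above are the crux, and they encode precisely where $\sigma$-regularity and the non-twisted nature of $\gamma$ enter. Once $z$ is available, extracting a distinct tracial state via functional calculus is standard.
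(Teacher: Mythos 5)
Your proposal is correct and follows essentially the same route as the paper: the forward direction is the identical eigenvalue argument $\varphi(\lambda_\sigma(h)) = \widetilde{\sigma}(g,h)\,\varphi(\lambda_\sigma(h))$ forcing $\varphi(\lambda_\sigma(h))=0$, and the converse perturbs $\tau_{FC}$ by a nonscalar positive element of $Z(A)\cap A_{FC}$. The only difference is that you construct that central element explicitly (the classical Kleppner-type sum $\sum_{g\in C}\widetilde{\sigma}(h_g,g_0)\,\lambda_\sigma(g)$ over the finite $\sigma$-regular class, whose well-definedness and centrality do follow from the cocycle identities you state), whereas the paper simply cites the proof of Omland's Theorem~2.7 for the nontriviality of $A_{FC}\cap Z(A)$; your functional-calculus normalization at the end is a slightly more elaborate version of the paper's direct choice of a positive nonscalar $z_0$ and the density $a\mapsto \tau_{FC}(a z_0)/\tau_{FC}(z_0)$.
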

\begin{proof}
 $(i)\Rightarrow (ii)$: Assume that $(i)$ holds.
If $G$ is ICC, then $A_{FC}\simeq \mathbb{C}$, and $(ii)$ is trivially satisfied in this case.
We may therefore assume that $G$ is not ICC, so $FC(G)\neq \{e\}$.
Let $\varphi$ be a $\gamma$-invariant state of $A_{FC}$.

\medskip Consider $h\in FC(G), \, h\neq e$.
As $h$ is not $\sigma$-regular, there exists $g\in G$ such that $gh=hg$ and $\sigma(g,h) \neq \sigma(h,g)$.
It clearly follows that $\widetilde{\sigma}(g,h)\neq 1$ and 
\[
\gamma_g\big(\lambda_\sigma(h)\big) = \widetilde{\sigma}(g,h)\, \lambda_\sigma(ghg^{-1}) = \widetilde{\sigma}(g,h)\, \lambda_\sigma(h)\,.
\]
Thus, we get
\[
\varphi\big(\lambda_\sigma(h)\big) = \varphi\big(\gamma_g(\lambda_\sigma(h))\big) = \widetilde{\sigma}(g,h)\, \varphi\big(\lambda_\sigma(h)\big)\,,
\]
so $\varphi\big(\lambda_\sigma(h)\big) = 0$. This implies that $\varphi$ agrees with $\tau_{FC}$. 


\medskip  $(ii)\Rightarrow (i)$: Assume that $(i)$ does not hold.
It is then known that the center $Z$ of $A$ is nontrivial.
In fact, $Z_{FC} := A_{FC} \cap Z$ is nontrivial in this case (cf.\ \cite[Proof of Theorem 2.7]{Om}).
So we may pick a non-scalar positive element $z_0\in Z_{FC}$ and define a tracial state on $A_{FC}$ by 
\[
\varphi(a) = \frac{1}{\tau_{FC}(z_0)} \,\, \tau_{FC}(az_0)\,, \quad \text{for } a \in A_{FC}\,.
\]
As $Z_{FC}$ is fixed by $\gamma$ and $\tau_{FC}$ is $\gamma$-invariant, $\varphi$ is also $\gamma$-invariant.

Now, observe that $\varphi(z_0) \neq \tau_{FC}(z_0)$.
Indeed, assume (for contradiction) that this is not true.
This means that $\tau_{FC}(z_0^{\ 2}) = \tau_{FC}(z_0)^2$, hence
\[
\tau_{FC}\big(\big(z_0-\tau_{FC}(z_0)I\big)^2\big) = 0\,.
\]
Since $\tau_{FC}$ is faithful, we get that $z_0 = \tau_{FC}(z_0) I $, i.e., $z_0$ is a scalar, which gives a contradiction.

This shows  that $\varphi \neq \tau_{FC}$, so  $\tau_{FC}$ is not the only  $\gamma$-invariant tracial state of $A_{FC}$. Thus, $(ii)$ does not hold.  
\end{proof}

\medskip An immediate consequence of Proposition~\ref{gamma} is the following: 

\begin{corollary} \label{Kle-FC}
Assume that $(G,\sigma)$ satisfies Kleppner's condition and let $\varphi$ be a tracial state of $A$. Then $\varphi$ agrees with $\tau$ on $A_{FC}$.
\end{corollary}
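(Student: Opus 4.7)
The plan is to deduce this corollary directly from Proposition~\ref{gamma}. Given a tracial state $\varphi$ on $A$, I would first restrict it to the C$^*$-subalgebra $A_{FC}$, obtaining a state $\varphi|_{A_{FC}}$ on $A_{FC}$ that is tracial (since the trace property is inherited by restriction to any subalgebra).

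The key observation is that this restricted state is automatically $\gamma$-invariant. Indeed, for every $g \in G$ and $a \in A_{FC}$, the unitarity of $\lambda_\sigma(g)$ together with the trace property of $\varphi$ gives
\[
\varphi\bigl(\gamma_g(a)\bigr) = \varphi\bigl(\lambda_\sigma(g)\, a\, \lambda_\sigma(g)^*\bigr) = \varphi(a).
\]
So $\varphi|_{A_{FC}}$ is a $\gamma$-invariant tracial state on $A_{FC}$.

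Since Kleppner's condition is assumed to hold, Proposition~\ref{gamma} applies and tells us that $\tau_{FC}$ is the \emph{unique} $\gamma$-invariant tracial state of $A_{FC}$. Therefore $\varphi|_{A_{FC}} = \tau_{FC} = \tau|_{A_{FC}}$, which is exactly the claim.

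There is essentially no obstacle here: the whole content of the corollary is packaged into Proposition~\ref{gamma}, and the only additional ingredient, namely that the restriction of any trace on $A$ to the $\gamma$-invariant subalgebra $A_{FC}$ is itself $\gamma$-invariant, is a one-line consequence of the trace property applied to conjugation by the unitaries $\lambda_\sigma(g)$. The proof is therefore a short invocation of Proposition~\ref{gamma} preceded by this trivial observation.
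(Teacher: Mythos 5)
Your proof is correct and is precisely the argument the paper intends: the corollary is stated there as an immediate consequence of Proposition~\ref{gamma}, the implicit step being exactly your observation that the restriction of a tracial state on $A$ to $A_{FC}$ is $\gamma$-invariant because $\gamma_g$ is conjugation by the unitary $\lambda_\sigma(g)$.
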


Let now $D^\sigma$ denote the subgroup of $\T$ generated by the image of $\sigma$,
i.e., by the set $\sigma(G \times G)$. 
We consider $D^\sigma$ as a discrete group and 
define the extension $G^\sigma$ of $G$ by $D^\sigma$ as the set $G \times D^\sigma$
equipped with the product given by  $(g,z)(h,w)=(gh,\sigma(g,h)zw)$.
Moreover, we let $(g,z) \to u_\sigma(g,z) \in C^*(G^\sigma)$ denote the canonical embedding of  $G^\sigma$ into $C^*(G^\sigma)$.

\begin{lemma}\label{character}
Let $\psi$ be a tracial state on $C^*(G^\sigma)$ and let $\widetilde\psi$ be the function on $G^\sigma$ given by $\widetilde\psi =  \psi\circ u_\sigma$.
Assume that    
\[
\widetilde\psi(g,z)=z\, \widetilde\psi(g,1)
\]
for all $g \in G$ and $z \in D^\sigma$, and
\[
\widetilde\psi(h,z)=0
\]
for all $h \in FC(G)\setminus\{e\}$ and $z \in D^\sigma$.

\medskip Then  \, $\widetilde\psi(h,z)=0$ for all $h \in FCH(G)\setminus\{e\}$ and $z \in D^\sigma$.
\end{lemma}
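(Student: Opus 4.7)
The plan is to proceed by transfinite induction along the upper FC-central series $\{F_\alpha\}$ of $G$, using Lemma~\ref{CM} applied to $C^*(G^\sigma)$ as the main workhorse at successor stages. The hypothesis $\widetilde\psi(g,z)=z\,\widetilde\psi(g,1)$ reduces everything to showing that $\widetilde\psi(h,1)=0$ for all $h\in FCH(G)\setminus\{e\}$. I would therefore prove by induction on $\alpha$ the statement $P(\alpha)$: \emph{$\widetilde\psi(h,1)=0$ for every $h\in F_\alpha\setminus\{e\}$}. The base case $P(1)$ is precisely the given hypothesis, since $F_1=FC(G)$. The limit case is immediate from $F_\alpha=\bigcup_{\gamma<\alpha}F_\gamma$. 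So the real work is the successor step.

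For the successor step, assume $P(\beta)$ and let $\alpha=\beta+1$. It suffices to treat $h\in F_\alpha\setminus F_\beta$, since $P(\beta)$ already handles elements of $F_\beta$. From $F_\alpha/F_\beta=FC(G/F_\beta)$, the set $\{g h g^{-1}F_\beta:g\in G\}$ is finite. On the other hand, $\beta\geq 1$ gives $F_1\subset F_\beta$, so $h\notin FC(G)$, and the conjugacy class of $h$ in $G$ is infinite. A pigeonhole argument then furnishes a sequence $\{g_i\}_{i\in\N}$ in $G$ such that the elements $g_ihg_i^{-1}$ are pairwise distinct but all lie in a common coset of $F_\beta$. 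Consequently, for $i\neq j$, the element
\[
k_{ij}:=g_jh^{-1}g_j^{-1}\,g_ihg_i^{-1}
\]
lies in $F_\beta\setminus\{e\}$.

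Now I would lift this situation into $G^\sigma$ and invoke Lemma~\ref{CM}. Computing the product $(g_j,1)(h,1)^{-1}(g_j,1)^{-1}(g_i,1)(h,1)(g_i,1)^{-1}$ in $G^\sigma$ produces an element of the form $(k_{ij},z_{ij})$ for some $z_{ij}\in D^\sigma$ (the cocycle values collecting in the $D^\sigma$-coordinate). Using the assumed multiplicativity of $\widetilde\psi$ in the $D^\sigma$-variable together with the inductive hypothesis $P(\beta)$, we get
\[
\widetilde\psi(k_{ij},z_{ij})=z_{ij}\,\widetilde\psi(k_{ij},1)=0
\]
for all $i\neq j$. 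Thus Lemma~\ref{CM}, applied on $C^*(G^\sigma)$ with the state $\psi$, the element $(h,1)$, and the sequence $(g_i,1)$, yields $\widetilde\psi(h,1)=0$, completing the successor step and hence the induction.

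The main obstacle is more bookkeeping than conceptual: one must verify the pigeonhole step carefully (pairwise-distinct conjugates of $h$ all sitting in a single $F_\beta$-coset, which relies crucially on $h\notin F_1$), and then track the cocycle factor $z_{ij}$ through the $G^\sigma$-computation so as to confirm that the multiplicativity assumption really reduces the problem to the underlying $G$-element $k_{ij}$, at which point $P(\beta)$ applies directly.
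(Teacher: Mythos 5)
Your proposal is correct and follows essentially the same route as the paper's proof: transfinite induction along the upper FC-central series, the same pigeonhole argument producing pairwise distinct conjugates of $h$ in a single $F_\beta$-coset, the same lift of the commutator-type elements to $G^\sigma$, and the same application of Lemma~\ref{CM} to $\psi$ on $C^*(G^\sigma)$ with the element $(h,1)$ and the sequence $(g_i,1)$. No gaps; the bookkeeping you flag (the cocycle factor landing in the $D^\sigma$-coordinate and being absorbed by the multiplicativity hypothesis) is exactly how the paper handles it.
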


\begin{proof}
If $G$ is ICC, then $FCH(G)=FC(G)=\{e\}$, so there is nothing to show.
Thus, suppose $G$ is not ICC.

Let $\{F_\alpha\}_\alpha$ denote the upper  FC-central series of $G$ and let $\alpha$ be an ordinal. It suffices to show that 
$\widetilde\psi(h,z)=0$  for all $h \in F_\alpha \setminus\{e\}$ and all $z \in D^\sigma$.

For $\alpha=1$ the result holds by assumption since $F_1=FC(G)$.
So let $\alpha > 1$ be an ordinal and suppose that $\widetilde\psi=0$ on $(F_\beta\setminus\{e\}) \times D^\sigma \subset G^\sigma$ 
for all $\beta < \alpha$.

If $\alpha$ is a limit ordinal, then by construction
\[
\big(F_\alpha\setminus\{e\}\big) \times D^\sigma
=\Big(\big(\bigcup_{\beta<\alpha}F_\beta\big)\setminus\{e\}\Big) \times D^\sigma
=\Big(\bigcup_{\beta<\alpha}\big(F_\beta\setminus\{e\}\big)\Big) \times D^\sigma.
\]
By hypothesis, $\widetilde\psi=0$ on the right-hand side, hence also on the left-hand side.

\medskip If $\alpha$ is a successor ordinal, then $\alpha=\beta+1$ for some ordinal $\beta$.
Pick an element $h \in F_\alpha \setminus F_\beta$.
Then the set $\{ ghg^{-1} \mid g \in G \}$ is infinite since $FC(G) \subset F_\beta$,
while the set $\{ ghg^{-1}F_\beta \mid g \in G \}$ is finite in $G/F_\beta$ since $F_\alpha/F_\beta=FC(G/F_\beta)$.
Hence, there exists an infinite sequence $(g_i)_{i\in \N}$ in $G$ such that $g_ihg_i^{-1}F_\beta=g_jhg_j^{-1}F_\beta$ for all $i,j \in \N$,
and $g_ihg_i^{-1} \neq g_jhg_j^{-1}$ whenever $i \neq j$.
This means that $g_jh^{-1}g_j^{-1}g_ihg_i^{-1}\in F_\beta\setminus\{e\}$ whenever $i \neq j$ in $\N$.
Since
\[
(g_j,1)(h,1)^{-1}(g_j,1)^{-1}(g_i,1)(h,1)(g_i,1)^{-1} = (g_jh^{-1}g_j^{-1}g_ihg_i^{-1}g_j,w)
\]
for some $w \in D^\sigma$, and $\widetilde\psi=0$ on $(F_\beta\setminus\{e\}) \times D^\sigma$, we get
\[
\widetilde\psi\Big((g_j,1)(h,1)^{-1}(g_j,1)^{-1}(g_i,1)(h,1)(g_i,1)^{-1}\Big)=\widetilde\psi(g_jh^{-1}g_j^{-1}g_ihg_i^{-1},w)=0
\]
whenever $i \neq j$ in $\N$. 
We may now apply Lemma~\ref{CM} and conclude that $\widetilde\psi(h,1)=0$.
Thus, $\widetilde\psi(h,z)= z\, \widetilde\psi(h,1) = 0$ for all $h \in F_\alpha\setminus\{e\}$ and $z \in D^\sigma$.

\end{proof}
Let $A_{FCH}$ denote the C$^*$-subalgebra of $A$ generated by $\{ \lambda_\sigma(h) \mid h \in FCH(G) \}$.
\begin{lemma}\label{FCH-trace}
Let $\varphi$ be a tracial state on $A$ which agrees with $\tau$ on $A_{FC}$.
Then $\varphi$ agrees with $\tau$ on $A_{FCH}$.
\end{lemma}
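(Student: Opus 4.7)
The plan is to apply Lemma~\ref{character} to a suitable lift of $\varphi$ to a tracial state on $C^*(G^\sigma)$. First I would construct a surjective $*$-homomorphism $\pi\colon C^*(G^\sigma) \to A$ by integrating the unitary representation $\Lambda\colon G^\sigma \to U(\ell^2(G))$ defined by $\Lambda(g,z) = z\,\lambda_\sigma(g)$; a direct check using the cocycle identity and the product rule $(g,z)(h,w)=(gh,\sigma(g,h)zw)$ on $G^\sigma$ shows that $\Lambda$ is a group homomorphism, so $\pi$ is well defined and satisfies $\pi(u_\sigma(g,z)) = z\,\lambda_\sigma(g)$.

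Next, set $\psi = \varphi \circ \pi$, which is a tracial state on $C^*(G^\sigma)$, and let $\widetilde\psi = \psi \circ u_\sigma$ as in Lemma~\ref{character}. By construction, $\widetilde\psi(g,z) = \varphi(z\,\lambda_\sigma(g)) = z\,\widetilde\psi(g,1)$ for all $g\in G$ and $z\in D^\sigma$, so the first hypothesis of Lemma~\ref{character} is immediate. For the second hypothesis, pick $h\in FC(G)\setminus\{e\}$ and $z\in D^\sigma$; since $\varphi$ agrees with $\tau$ on $A_{FC}$ by assumption, we get $\widetilde\psi(h,z) = z\,\varphi(\lambda_\sigma(h)) = z\,\tau(\lambda_\sigma(h)) = 0$. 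Lemma~\ref{character} then yields $\widetilde\psi(h,z)=0$ for every $h\in FCH(G)\setminus\{e\}$ and every $z\in D^\sigma$, and in particular $\varphi(\lambda_\sigma(h)) = \widetilde\psi(h,1) = 0 = \tau(\lambda_\sigma(h))$ for every $h\in FCH(G)\setminus\{e\}$.

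To conclude, I would use that $FCH(G)$ is a subgroup of $G$, which combined with $\lambda_\sigma(g)\lambda_\sigma(h) = \sigma(g,h)\lambda_\sigma(gh)$ and $\lambda_\sigma(g)^* = \overline{\sigma(g^{-1},g)}\,\lambda_\sigma(g^{-1})$ shows that the linear span of $\{\lambda_\sigma(h)\mid h\in FCH(G)\}$ is a dense $*$-subalgebra of $A_{FCH}$. Since $\varphi$ and $\tau$ agree on each generator $\lambda_\sigma(h)$, they agree on this dense subspace, and continuity finishes the job.

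The main obstacle is really just the bookkeeping needed to set up the hypotheses of Lemma~\ref{character} in the twisted setting; once the extension group $G^\sigma$ and the lifted trace $\psi$ are in place, the transfinite induction already carried out in that lemma does the substantive work. The only slightly subtle verification is that $\Lambda$ is a genuine unitary representation of $G^\sigma$ (so that $\pi$ exists) and that the normalization $\Lambda(e,z)=z\cdot I$ produces the factorization $\widetilde\psi(g,z)=z\,\widetilde\psi(g,1)$ required by Lemma~\ref{character}; both are formal consequences of the definitions.
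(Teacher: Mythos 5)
Your proposal is correct and follows essentially the same route as the paper: lift $\varphi$ to a tracial state $\psi=\varphi\circ\pi$ on $C^*(G^\sigma)$ via the representation $(g,z)\mapsto z\,\lambda_\sigma(g)$, verify the two hypotheses of Lemma~\ref{character}, and conclude that $\varphi$ vanishes on $\lambda_\sigma(h)$ for all $h\in FCH(G)\setminus\{e\}$. The only difference is that you spell out the final density argument on the span of $\{\lambda_\sigma(h)\mid h\in FCH(G)\}$, which the paper leaves implicit.
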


\begin{proof}
As the map $(g,z) \mapsto z\lambda_\sigma(g)$ is a unitary representation of $G^\sigma$ on $\ell^2(G)$,
there exists a surjective $^*$-homomorphism $\pi\colon C^*(G^\sigma) \to A$ satisfying $\pi(u_\sigma(g,z))=z\,\lambda_\sigma(g)$ for each $(g,z) \in G^\sigma$.
Thus $\varphi$ lifts to a tracial state $\psi = \varphi \circ \pi$ on $C^*(G^\sigma)$.
For all $g \in G$ and $z \in D^\sigma$, we have
\[
\psi(u_\sigma(g,z))=
\varphi(z\,\lambda_\sigma(g))=z\,\varphi(\lambda_\sigma(g))=
z\,\psi(u_\sigma(g,1)).
\]
Since $\varphi(\lambda_\sigma(g)) = 0$ \, for all $g \in FC(G) \setminus\{e\}$ (by assumption),
it follows that $\psi(u_\sigma(g,z))= 0$ for all $g \in FC(G)\setminus\{e\}$ and $z \in D^\sigma$.
Hence, using Lemma~\ref{character}, we get that
$\psi(u_\sigma(g,z))=0$ for all $g \in FCH(G) \setminus\{e\}$ and $z \in D^\sigma$.
In particular, we get
\[
\varphi(\lambda_\sigma(g))= \psi(u_\sigma(g,1))=0
\]
for all $g \in FCH(G) \setminus\{e\}$.
Thus, $\varphi$ agrees with $\tau$ on $A_{FCH}$, as desired.
\end{proof}

\begin{proof}[Proof of Theorem~\ref{K}]
Assume that $(i)$ holds and let $\varphi$ be a tracial state on $A$. Corollary~\ref{Kle-FC} tells us that $\varphi$ agrees with $\tau$ on $A_{FC}$.
Since $G$ is FC-hypercentral, we have that $A=A_{FCH}$. Applying Lemma~\ref{FCH-trace}, we conclude that $\varphi$ coincides with $\tau$.
Hence, $(iii)$ holds. 
Since FC-hypercentral groups are amenable, we get from Corollary~\ref{GUTS} that $(ii)$ follows from $(iii)$.
Finally, we know that $(ii)$ implies $(i)$ in general.
\end{proof}

We will also show how Echterhoff's characterization of FC-hypercentrality mentioned in Section~\ref{FCH-section}
may be used to give a different proof of the implication $(i) \Rightarrow (ii)$ in Theorem~\ref{K}. We will need the following result:

\begin{proposition}\label{prime-maximal}
Assume that every prime ideal of $C^*(G^\sigma)$ is maximal.
Then $(G,\sigma)$ is C$^*$-simple whenever $(G,\sigma)$ satisfies Kleppner's condition.
\end{proposition}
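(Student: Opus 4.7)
The plan is to leverage the surjective $^*$-homomorphism $\pi\colon C^*(G^\sigma) \twoheadrightarrow A = C_r^*(G,\sigma)$ constructed in the proof of Lemma~\ref{FCH-trace}, which identifies $A$ with the quotient $C^*(G^\sigma)/\ker\pi$. The goal is then to show that, under Kleppner's condition, $\ker\pi$ is a prime ideal of $C^*(G^\sigma)$; once this is established, the standing hypothesis that every prime ideal of $C^*(G^\sigma)$ is maximal immediately forces $A = C^*(G^\sigma)/\ker\pi$ to be simple, which is exactly C$^*$-simplicity of $(G,\sigma)$.

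To verify that $\ker\pi$ is prime, I would invoke the characterization recalled in the Preliminaries: Kleppner's condition for $(G,\sigma)$ is equivalent to $A = C_r^*(G,\sigma)$ being a prime C$^*$-algebra. Translating primeness of the quotient back to an ideal-theoretic statement about $C^*(G^\sigma)$ is routine: if $J_1, J_2$ are closed two-sided ideals of $C^*(G^\sigma)$ with $J_1 J_2 \subset \ker\pi$, then $\pi(J_1)$ and $\pi(J_2)$ are closed two-sided ideals of $A$ whose product vanishes (using surjectivity of $\pi$), so primeness of $A$ forces $\pi(J_i) = 0$ for some $i$, i.e.\ $J_i \subset \ker\pi$.

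Assembling the steps: Kleppner's condition gives $A$ prime, hence $\ker\pi$ is prime in $C^*(G^\sigma)$; the hypothesis makes $\ker\pi$ maximal; therefore $A$ is simple. There is no substantial obstacle here, as the argument is a short assembly of ingredients already in the paper (the surjection $\pi$ from Lemma~\ref{FCH-trace} and the equivalence between Kleppner's condition and primeness of $C_r^*(G,\sigma)$). The only conceptual point worth flagging is the elementary lemma that a C$^*$-algebra is prime if and only if the zero ideal is prime, which is what lets us promote primeness of $A$ to primeness of $\ker\pi$.
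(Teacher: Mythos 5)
Your proposal is correct and follows essentially the same route as the paper: use the surjection $\pi\colon C^*(G^\sigma)\to C_r^*(G,\sigma)$ from the proof of Lemma~\ref{FCH-trace}, note that Kleppner's condition makes $C_r^*(G,\sigma)$ prime so that $\ker\pi$ is a prime ideal, and then invoke the hypothesis to conclude that $\ker\pi$ is maximal and the quotient simple. The only difference is that you spell out the (routine) verification that the kernel of a surjection onto a prime C$^*$-algebra is a prime ideal, which the paper leaves implicit.
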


\begin{proof}
Assume that $(G,\sigma)$ satisfies Kleppner's condition. Then $C_r^*(G,\sigma)$ is prime \cite[Theorem~2.7]{Om}.
Let $\pi$ denote the surjective $^*$-homomorphism $ C^*(G^\sigma) \to C_r^*(G,\sigma)$ obtained in the proof of Lemma~\ref{FCH-trace}.  
The kernel $\mathcal{J}$ of $\pi$ is then a prime ideal of $C^*(G^\sigma)$.
Hence, the assumption gives that $\mathcal{J}$ is maximal, so $C^*(G^\sigma)/\mathcal{J} \simeq C_r^*(G,\sigma)$ is simple.
\end{proof}
 
\begin{proof}[Another proof of $(i) \Rightarrow (ii)$ in Theorem~\ref{K}]
Using Proposition~\ref{FC-central-extension}, we get that $G^\sigma$ is also FC-hypercentral.
Hence, \cite[Corollary~3.2]{Ech} tells us that every prime ideal of $C^*_r(G^\sigma)$ is maximal,
so the assertion follows from Proposition~\ref{prime-maximal}.
\end{proof}

We also record another consequence of Proposition~\ref{prime-maximal}. 
\begin{corollary}\label{T1}
Assume that $G$ is a countable discrete group such that $G^\sigma$ is a T$_1$-group.
Then $(G,\sigma)$ is C$^*$-simple whenever $(G,\sigma)$ satisfies Kleppner's condition.
\end{corollary}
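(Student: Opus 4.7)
The plan is to reduce the claim directly to Proposition~\ref{prime-maximal} by upgrading the T$_1$ hypothesis (every primitive ideal of $C^*(G^\sigma)$ is maximal) to the stronger statement that every \emph{prime} ideal of $C^*(G^\sigma)$ is maximal. Once this upgrade is established, the conclusion is immediate from Proposition~\ref{prime-maximal}.

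The first step is to verify that $G^\sigma$ is countable. Since $G$ is countable, the set $\sigma(G\times G)\subset\T$ is countable, so the subgroup $D^\sigma$ it generates is countable. Therefore $G^\sigma = G\times D^\sigma$ is countable, and consequently $C^*(G^\sigma)$ is a separable C$^*$-algebra (its canonical dense $^*$-subalgebra is spanned by the countable set $\{u_\sigma(g,z):(g,z)\in G^\sigma\}$).

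The second and crucial step is to invoke the classical Dixmier result that in a separable C$^*$-algebra every prime ideal is primitive (this is the content of the correspondence between prime and primitive ideals for separable C$^*$-algebras). Combined with the assumption that $G^\sigma$ is a T$_1$-group, i.e.\ every primitive ideal of $C^*(G^\sigma)$ is maximal, this yields that every prime ideal of $C^*(G^\sigma)$ is maximal. The hypotheses of Proposition~\ref{prime-maximal} are therefore satisfied, and the proposition gives that $(G,\sigma)$ is C$^*$-simple whenever $(G,\sigma)$ satisfies Kleppner's condition.

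I do not anticipate a genuine obstacle here: once the countability of $G^\sigma$ has been observed, the argument is essentially a citation of a standard separable-C$^*$-algebra fact followed by the application of Proposition~\ref{prime-maximal}. The only minor point worth making explicit in the written-up proof is why countability of $G$ forces $D^\sigma$ (and hence $G^\sigma$) to be countable, since $D^\sigma$ is viewed as a discrete subgroup of $\T$.
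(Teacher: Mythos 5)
Your proposal is correct and follows exactly the paper's own argument: countability of $G$ gives countability of $G^\sigma$, hence separability of $C^*(G^\sigma)$, so prime ideals are primitive and the T$_1$ hypothesis lets you apply Proposition~\ref{prime-maximal}. No issues.
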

\begin{proof}
Since $G$ is countable, $G^\sigma$ is also countable.
Hence, $C^*(G^\sigma)$ is separable, so its prime ideals are primitive ideals,
and the assumption in Proposition~\ref{prime-maximal} is therefore satisfied.
\end{proof}

We note that if $G$ is amenable and it satisfies the assumptions in Corollary~\ref{T1},
then it is not difficult to deduce from the Moore-Rosenberg result cited in Section~\ref{FCH-section} that $G$ is FC-hypercentral.
Hence, Corollary~\ref{T1} is covered by Theorem~\ref{K} in this case.

\medskip We do not know of any group in $\K^{am}$ that is not FC-hypercentral. Thus, a natural question is:  

\begin{question}
Does the class of FC-hypercentral groups coincide with $\K^{am}$ ? 

\medskip This question may be reformulated as follows. Assume that $G$ is neither ICC, nor FC-hypercentral, but amenable.
Can one always find $\sigma \in Z^2(G,\sigma)$ such that $(G,\sigma)$ satisfies Kleppner's condition,
but $(G,\sigma)$ is not C$^*$-simple or does not have the unique trace property ?
It does not seem easy to answer this question positively.
\end{question}

\section{On the unique trace property for a larger class of groups}
In this section, we let $G$ be a group and $\sigma \in Z^2(G,\T)$.
Motivated by Proposition~\ref{ICC-quotient}, we define $ICC(G)=G/FCH(G)$.
Moreover, we let $\P$ denote the class of  groups considered in \cite{BC2},
that consists of all PH groups  \cite{Pro} and of all groups satisfying property ($P_{\rm com}$) \cite{BCH}.
The class $\P$ is a large subclass of the class of ICC groups, and the only amenable group belonging to $\P$ is the trivial group.
The main purpose of this section is to show the following result:

\begin{theorem}\label{FCH-P}
Assume that $K = ICC(G)$ belongs to $\P$. Then we have:

\begin{itemize}
\item[a)] $(G,\sigma)$ satisfies Kleppner's condition if and only if $(G,\sigma)$ has the unique trace property. 
\item[b)] Set $H= FCH(G)$ and let $\sigma_H$ denote the restriction of $\sigma$ to $H\times H$.
If $(H, \sigma_H)$ satisfies Kleppner's condition, then $(G,\sigma)$ is C$^*$-simple and has the unique trace property.
\end{itemize}

\end{theorem}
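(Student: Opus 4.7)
Write $H = FCH(G)$, $A = C^*_r(G,\sigma)$, and $A_{FCH}$ for the C$^*$-subalgebra generated by $\{\lambda_\sigma(h)\mid h\in H\}$. The plan is to reduce both parts to a combination of the FC-hypercentral machinery already developed (Corollary~\ref{Kle-FC} and Lemma~\ref{FCH-trace}) together with a Powers-type averaging argument coming from the hypothesis $K := ICC(G) \in \P$. Since $H$ is normal in $G$, a standard construction produces a $G$-equivariant faithful conditional expectation $E\colon A \to A_{FCH}$ determined by $E(\lambda_\sigma(g)) = \lambda_\sigma(g)$ if $g \in H$ and $E(\lambda_\sigma(g)) = 0$ otherwise, which furthermore satisfies $\tau = \tau \circ E$.

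\medskip

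For part~(a), necessity of Kleppner's condition is standard. Assuming Kleppner's condition for $(G,\sigma)$, let $\varphi$ be a tracial state on $A$. Corollary~\ref{Kle-FC} yields $\varphi = \tau$ on $A_{FC}$, which Lemma~\ref{FCH-trace} promotes to $\varphi = \tau$ on $A_{FCH}$. It therefore suffices to prove $\varphi = \varphi \circ E$, i.e., $\varphi(\lambda_\sigma(g)) = 0$ for every $g \in G$ with $gH \neq eH$. For this I would invoke the Powers-type averaging property that defines the class $\P$ (used in \cite{BC2}): for any nontrivial $k \in K$ and $\varepsilon > 0$, there exist $k_1,\ldots,k_m \in K$ with $\big\| \tfrac{1}{m}\sum_j \lambda(k_j)\lambda(k)\lambda(k_j)^* \big\| < \varepsilon$ in $C^*_r(K)$. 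Lifting $k_j$ to $g_j \in G$, each conjugate $\lambda_\sigma(g_j)\lambda_\sigma(g)\lambda_\sigma(g_j)^*$ is a scalar multiple of $\lambda_\sigma(g_j g g_j^{-1})$, with $g_j g g_j^{-1} \notin H$, so every summand is annihilated by $E$. Transferring the averaging estimate to $A$, applying $\varphi$, and using traciality together with $\varphi = \tau$ on $A_{FCH}$ forces $|\varphi(\lambda_\sigma(g))| < \varepsilon$, and $\varepsilon \to 0$ finishes part~(a).

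\medskip

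For part~(b), I first observe that Kleppner's condition on $(H,\sigma_H)$ implies Kleppner's condition on $(G,\sigma)$. Indeed, if $g \in H\setminus\{e\}$, then $\sigma$-regularity in $G$ restricts to $\sigma_H$-regularity in $H$, and the hypothesis makes its $H$-conjugacy class (hence its $G$-conjugacy class) infinite; if $g \in G \setminus H$, then $gH$ is nontrivial in the ICC group $K$, so the $G$-conjugacy class of $g$ is infinite. Hence the unique trace property for $(G,\sigma)$ follows from part~(a). For C$^*$-simplicity, Theorem~\ref{K} applied to the FC-hypercentral group $H$ gives that $A_{FCH}\simeq C^*_r(H,\sigma_H)$ is simple. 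Given a nonzero closed two-sided ideal $I \subset A$, faithfulness of $E$ combined with the Powers averaging of part~(a) (applied now to a positive element of $I$, with the $g_j$'s implementing unitary conjugation) should produce a nonzero element of $I \cap A_{FCH}$; simplicity of $A_{FCH}$ then forces $I \cap A_{FCH} = A_{FCH}$, hence $I = A$.

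\medskip

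The main obstacle will be executing the Powers averaging cleanly in the twisted-crossed-product setting, namely verifying that the averaging estimates for $\P$-groups in \cite{BC2} pass through the extension $1 \to H \to G \to K \to 1$ while respecting the twist. The delicate points are keeping track of the scalar twist factors $\widetilde\sigma(g_j,g)$ arising under conjugation, the $G$-equivariance and faithfulness of $E$, and—for part~(b)—ensuring that averaging a positive element of $I$ actually yields a nonzero $A_{FCH}$-part rather than being annihilated by $E$. Once this technical bridge is in place, both parts follow by combining it with the FC-hypercentral tools assembled earlier in the paper.
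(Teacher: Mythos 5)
Your overall strategy is the same as the paper's: both proofs view $A = C^*_r(G,\sigma)$ as sitting over $A_{FCH} \cong C^*_r(H,\sigma_H)$ via the canonical faithful $G$-equivariant conditional expectation $E$, use Corollary~\ref{Kle-FC} and Lemma~\ref{FCH-trace} to pin any tracial state down to $\tau$ on $A_{FCH}$, and then invoke the hypothesis $K = ICC(G) \in \P$ to control everything outside $A_{FCH}$. The difference is where that last input comes from. The paper makes the crossed-product structure explicit: it writes $A \cong C^*_r(A_{FCH},K,\beta,\omega)$ for a twisted action $(\beta,\omega)$ of $K$, shows (Proposition~\ref{beta}) that Kleppner's condition is equivalent to $\tau_{FCH}$ being the unique $\beta$-invariant trace, and then quotes Corollaries~3.9 and~3.11 of \cite{BC2}: for $K\in\P$, traces of the reduced twisted crossed product biject with $\beta$-invariant traces of the coefficient algebra, and minimality plus uniqueness of the invariant trace give simplicity and uniqueness of the trace. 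You propose instead to re-derive these facts by hand via Powers averaging, and you correctly flag this as an unexecuted ``technical bridge.'' That bridge is the entire content of the cited results and is not routine: the averaging must be performed inside the twisted crossed product so that the estimate controls the operator norm in $A$ rather than in $C^*_r(K)$, and, more seriously, $\P$ is the union of the PH groups and the groups with property ($P_{\rm com}$) of \cite{BCH}; the single-element estimate $\big\|\frac{1}{m}\sum_j \lambda(k_j)\lambda(k)\lambda(k_j)^*\big\|<\varepsilon$ you invoke is the weak Powers property and is not how the ($P_{\rm com}$) case is handled, so as written your argument would cover only part of $\P$. Replacing the sketched averaging by a citation of \cite[Corollaries~3.9 and~3.11]{BC2} closes the gap and makes your proof coincide with the paper's.

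Two smaller points. In part~b), your observation that Kleppner's condition for $(H,\sigma_H)$ implies Kleppner's condition for $(G,\sigma)$ (elements of $H$ by restricting regularity, elements outside $H$ because $K$ is ICC) is correct and lets you recycle part~a) for the trace statement; the paper gets both conclusions at once from \cite[Corollary~3.11]{BC2}. For simplicity in part~b), averaging a positive $x\in I$ does not literally produce a nonzero element of $I\cap A_{FCH}$; the standard route first uses simplicity of $A_{FCH}$ (equivalently, minimality of the system) to replace $x$ by some $y\in I$ with $E(y)=1$, and then averages $y-1$ to norm less than $1$, producing an invertible element of $I$. Your outline should be adjusted accordingly.
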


We set  $A= C_r^*(G,\sigma)$ and let $A_{FC}$  and $A_{FCH}$ be defined as in the previous section.
Since $FCH(G)$ is normal in $G$, the action $\gamma$ of $G$ on $A_{FC}$ extends to an action $\widetilde\gamma$ of $G$ on $A_{FCH}$ given by
\[
\widetilde\gamma_g(a) = \lambda_\sigma(g) \,a \,\lambda_\sigma(g)^*
\]
for all $g\in G$ and $a \in A_{FCH}$.
Let $\tau_{FCH}$ denote the restriction of the canonical tracial state $\tau$ on $A$ to $A_{FCH}$.
Clearly, $\tau_{FCH}$ is $\widetilde\gamma$-invariant. Analogously to Proposition~\ref{gamma}, we have:

\begin{proposition}\label{FCH-gamma}
The following conditions are equivalent:
\begin{itemize}
\item[(i)] $(G,\sigma)$ satisfies Kleppner's condition,
\item[(ii)] $\tau_{FCH}$ is the unique $\widetilde\gamma$-invariant tracial state of $A_{FCH}$.
\end{itemize}
\end{proposition}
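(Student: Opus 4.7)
The strategy mirrors the proof of Proposition~\ref{gamma}, combined with an analogue of Lemma~\ref{FCH-trace} carried out at the level of $A_{FCH}$ instead of $A$. For the reverse implication (ii) $\Rightarrow$ (i), the argument is essentially verbatim: if Kleppner's condition fails, then as in \cite[Proof of Theorem~2.7]{Om} the center $Z$ of $A$ meets $A_{FC}$ nontrivially. Picking a non-scalar positive $z_0 \in A_{FC} \cap Z \subset A_{FCH}$ and defining $\varphi(a) = \tau_{FCH}(z_0)^{-1}\,\tau_{FCH}(a\,z_0)$ yields a tracial state on $A_{FCH}$; it is $\widetilde\gamma$-invariant because $z_0$, being central in $A$, is fixed by $\widetilde\gamma$, and it differs from $\tau_{FCH}$ by faithfulness of $\tau$.

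For the forward implication (i) $\Rightarrow$ (ii), let $\varphi$ be a $\widetilde\gamma$-invariant tracial state on $A_{FCH}$. Since $A_{FC}$ is $\widetilde\gamma$-stable and $\widetilde\gamma|_{A_{FC}} = \gamma$, Proposition~\ref{gamma} applied to $\varphi|_{A_{FC}}$ gives $\varphi(\lambda_\sigma(h)) = 0$ for every $h \in FC(G)\setminus\{e\}$. I would then propagate this vanishing statement up the upper FC-central series $\{F_\alpha\}_\alpha$ of $G$ by transfinite induction, following the pattern of the proof of Lemma~\ref{character}. The limit-ordinal step is immediate, and the base case $\alpha = 1$ was just established. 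For the successor step $\alpha = \beta + 1$, assuming $\varphi(\lambda_\sigma(x)) = 0$ for all $x \in F_\beta\setminus\{e\}$, pick $h \in F_\alpha \setminus F_\beta$ and, exactly as in Lemma~\ref{character}, choose a sequence $(g_i)_{i\in\N}$ in $G$ such that the conjugates $g_ihg_i^{-1}$ are pairwise distinct but lie in a common $F_\beta$-coset, so that $g_jh^{-1}g_j^{-1}g_ihg_i^{-1} \in F_\beta\setminus\{e\}$ whenever $i\neq j$.

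The heart of the argument, and the main technical point, is a Carey--Moran computation performed directly inside $A_{FCH}$. Set $v_i = \widetilde\gamma_{g_i}(\lambda_\sigma(h)) = \widetilde\sigma(g_i,h)\,\lambda_\sigma(g_ihg_i^{-1}) \in A_{FCH}$, which makes sense because $FCH(G)$ is normal in $G$, and let $c = \varphi(\lambda_\sigma(h))$. A direct computation using the multiplication rules of $A$ shows $v_j^*v_i = \omega_{ji}\,\lambda_\sigma(g_jh^{-1}g_j^{-1}g_ihg_i^{-1})$ for some $\omega_{ji}\in\T$, so the induction hypothesis forces $\varphi(v_j^*v_i)=0$ for $i\neq j$, while $\widetilde\gamma$-invariance gives $\varphi(v_i)=c$ for every $i$. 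Applying the tracial state $\varphi$ to $b_N^*b_N$ with $b_N = I - \overline{c}\sum_{i=1}^N v_i$ collapses to $0\le \varphi(b_N^*b_N) = 1 - N|c|^2$, forcing $c=0$ as $N\to\infty$. The induction then yields $\varphi(\lambda_\sigma(h))=0$ for every $h\in FCH(G)\setminus\{e\}$, hence $\varphi = \tau_{FCH}$. The main obstacle is simply bookkeeping the twists in the formula for $v_j^*v_i$; an equivalent alternative is to lift $\varphi$ to a tracial state on $C^*(G^\sigma)$ by extending $\varphi\circ\pi_H$ from $C^*(FCH(G)^{\sigma_H})$ by zero outside $FCH(G)\times D^\sigma$ and apply Lemma~\ref{character}, where the invariance needed for positive-definiteness of this extension reduces, via the cocycle identity applied to $(ghg^{-1},g,g^{-1})$, to the identity $\widetilde\sigma(g,h) = \sigma(gh,g^{-1})\,\sigma(g,h)\,\overline{\sigma(g,g^{-1})}$.
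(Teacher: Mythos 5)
Your proof is correct, but for the implication (i) $\Rightarrow$ (ii) it takes a different route from the paper. The paper's proof is shorter on the surface: given a $\widetilde\gamma$-invariant tracial state $\phi$ on $A_{FCH}$, it first shows $\phi(\lambda_\sigma(h))=0$ for $h\in FC(G)\setminus\{e\}$ exactly as you do, but then extends $\phi$ to the tracial state $\varphi=\phi\circ E$ on all of $A$, where $E$ is the canonical conditional expectation of $A$ onto $A_{FCH}$ (traciality of $\phi\circ E$ uses the $\widetilde\gamma$-invariance of $\phi$), and concludes by citing Lemma~\ref{FCH-trace} verbatim. Your argument instead re-runs the transfinite induction of Lemma~\ref{character} internally to $A_{FCH}$, replacing the traciality used in Lemma~\ref{CM} by $\widetilde\gamma$-invariance to get $\varphi(v_i)=\varphi(\lambda_\sigma(h))$ for $v_i=\widetilde\gamma_{g_i}(\lambda_\sigma(h))$; this is sound, since the $v_i$ are unitaries in $A_{FCH}$ (normality of $FCH(G)$) and each cross term $v_j^*v_i$ is a unimodular scalar times $\lambda_\sigma(g_jh^{-1}g_j^{-1}g_ihg_i^{-1})$ with $g_jh^{-1}g_j^{-1}g_ihg_i^{-1}\in F_\beta\setminus\{e\}$, so the Carey--Moran estimate $0\le 1-N|c|^2$ goes through. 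What the paper's route buys is economy (it reuses Lemma~\ref{FCH-trace} as a black box) at the cost of having to know that composing an invariant trace with the canonical expectation yields a trace on $A$; what your route buys is self-containedness inside $A_{FCH}$, avoiding that extension step entirely, at the cost of redoing the twisted bookkeeping. The implication (ii) $\Rightarrow$ (i) is handled identically in both. Your closing alternative (extending the lifted positive-definite function by zero outside $FCH(G)^{\sigma}$, using normality and $\widetilde\gamma$-invariance to get traciality on $C^*(G^\sigma)$, then applying Lemma~\ref{character}) is also viable, and is in spirit closest to what the paper's appeal to $E$ accomplishes.
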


\begin{proof}
$(i) \Rightarrow (ii)$:
Assume that $(i)$ holds.
If $G$ is ICC, then $A_{FCH} \cong \mathbb{C}$, and $(ii)$ is trivially satisfied in this case.
We may therefore assume that $G$ is not ICC, so $FC(G) \neq \{e\}$.
Let $\phi$ be a $\widetilde\gamma$-invariant tracial state of $A_{FCH}$.
As in Proposition~\ref{gamma}, we compute that $\phi(\lambda_\sigma(h))=0$ for all $h \in FC(G) \setminus \{e\}$.
Now, let $E$ denote the canonical conditional expectation from $A$ onto $A_{FCH}$.
Then $\varphi := \phi \circ E$ is a tracial state on $A$ such that $\varphi(\lambda_\sigma(h))=0$ for all $h \in FC(G) \setminus \{e\}$.
Applying Lemma~\ref{FCH-trace}, we get that $\varphi = \phi_{|A_{FCH}} =\tau_{FCH}$.

Since $A_{FC} \subset A_{FCH}$,
the proof of $(ii) \Rightarrow (i)$ goes along the same lines as the one given for this implication in Proposition~\ref{gamma}.
\end{proof}

Set $H=FCH(G)$ and $K=ICC(G) = G/H$,
and let $q$ denote the canonical homomorphism from $G$ onto $K$. 
Further, let $n\colon K\to G$ be a section for $q$ satisfying $n(e)=e$,
and define $m\colon K\times K \to H$ by $m(k,l)=n(k)n(l)n(kl)^{-1}$.
Finally, let $\sigma_H$ denote the restriction of $\sigma$ to $H\times H$. 

Moreover, define $\beta\colon K\to \aut (A_{FCH})$ by $\beta = \widetilde\gamma\circ n$ and $\omega\colon K \times K\to {\mathcal U}(A_{FCH})$ by
\[
\omega(k,l) = \sigma\big(n(k),n(l)\big)\, \sigma\big(m(k,l),n(kl)\big)^*\, \lambda_{\sigma}(m(k,l))\,.
\]
Then $(\beta,\omega)$ is a twisted action of $K$ on $A_{FCH}$
such that $C^*_r(G,\sigma)$ is $^*$-isomorphic to the twisted crossed product $ C^*_r(A_{FCH},K,\beta,\omega)$.
This follows from \cite{Bed} after noticing that $A_{FCH}$ may be identified with $C^*_r(H, \sigma_H)$
via the $^*$-isomorphism sending $\lambda_\sigma(h)$ to $\lambda_{\sigma_H}(h)$ for each $h \in H$. 

\medskip Since $\tau_{FCH}$ is $\widetilde\gamma$-invariant, $\tau_{FCH}$ is also $\beta$-invariant. Moreover, we have:

\begin{proposition}\label{beta} 
The following conditions are equivalent:
\begin{itemize}
\item[(i)] $(G,\sigma)$ satisfies Kleppner's condition,
\item[(ii)] $\tau_{FCH}$ is the unique $\beta$-invariant tracial state of $A_{FCH}$.
\end{itemize}
\end{proposition}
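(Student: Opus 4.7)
The strategy is to reduce Proposition~\ref{beta} directly to Proposition~\ref{FCH-gamma} by showing that for tracial states on $A_{FCH}$, $\widetilde\gamma$-invariance and $\beta$-invariance are equivalent conditions. Once this is established, the equivalence $(i) \Leftrightarrow (ii)$ for $\beta$ follows immediately from the same equivalence proved for $\widetilde\gamma$.

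The key observation is that for every $h \in H = FCH(G)$, the automorphism $\widetilde\gamma_h$ of $A_{FCH}$ is inner, being implemented by the unitary $\lambda_\sigma(h)$, which lies in $A_{FCH}$ by definition. Consequently, every tracial state $\phi$ on $A_{FCH}$ satisfies $\phi \circ \widetilde\gamma_h = \phi$ automatically, without any further invariance hypothesis. Now any $g \in G$ can be written as $g = n(q(g))\, h$ with $h = n(q(g))^{-1} g \in H$, since $q(h) = e$. Using that $\widetilde\gamma$ is a genuine action of $G$ on $A_{FCH}$, this factorization gives
\[
\widetilde\gamma_g \;=\; \widetilde\gamma_{n(q(g))} \circ \widetilde\gamma_h \;=\; \beta_{q(g)} \circ \widetilde\gamma_h\,.
\]
Therefore, for any tracial state $\phi$ on $A_{FCH}$,
\[
\phi \circ \widetilde\gamma_g \;=\; \phi \circ \beta_{q(g)} \circ \widetilde\gamma_h \;=\; \phi \circ \beta_{q(g)}\,,
\]
so $\phi$ is $\widetilde\gamma$-invariant (i.e.\ $\phi \circ \widetilde\gamma_g = \phi$ for all $g \in G$) if and only if it is $\beta$-invariant (i.e.\ $\phi \circ \beta_k = \phi$ for all $k \in K$).

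With this equivalence of invariance conditions in hand, the two properties, ``$\tau_{FCH}$ is the unique $\widetilde\gamma$-invariant tracial state of $A_{FCH}$'' and ``$\tau_{FCH}$ is the unique $\beta$-invariant tracial state of $A_{FCH}$'', refer to the very same subset of tracial states; noting that $\tau_{FCH}$ is itself $\beta$-invariant as already recorded in the text preceding the proposition, the statement now follows verbatim from Proposition~\ref{FCH-gamma}. I do not foresee a genuine obstacle here: the argument is essentially the remark that passing from $\widetilde\gamma$ to $\beta$ discards only the $H$-part of the action, and that part acts by inner automorphisms of $A_{FCH}$ and hence is invisible to tracial states.
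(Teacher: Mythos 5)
Your proposal is correct and follows essentially the same route as the paper: both reduce to Proposition~\ref{FCH-gamma} by decomposing $g\in G$ into a section part $n(q(g))$ and an $H$-part, and then use the trace property to show that the inner automorphism $\widetilde\gamma_h=\ad(\lambda_\sigma(h))$, $h\in H=FCH(G)$, is invisible to any tracial state on $A_{FCH}$, so that $\beta$-invariance and $\widetilde\gamma$-invariance coincide. The only cosmetic differences are the order of the factorization ($g=n(k)h$ versus $g=h\,n(k)$ in the paper) and that you state the inner-invariance principle abstractly where the paper carries out the corresponding computation directly.
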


\begin{proof}
Assume $(i)$ holds.
Proposition~\ref{FCH-gamma} gives that $\tau_{FCH}$ is the unique $\widetilde\gamma$-invariant tracial state on $A_{FCH}$.
Consider now a $\beta$-invariant tracial state $\omega$ of $A_{FCH}$ and let $g\in G$.
Write $g = h \, n(k)$ where $k = q(g) \in K$ and $ h = g \, n(k)^{-1} \in H$.
Then, for each $s \in H$, we have
$$ \omega\big(\widetilde\gamma_g(\lambda_{\sigma}(s) ) \big) =  \omega\big(\widetilde\gamma_h\beta_k(\lambda_{\sigma}(s) ) \big) $$
$$ = \omega\big(\lambda_{\sigma}(h) \,\beta_k(\lambda_{\sigma}(s) ) \,\lambda_{\sigma}(h)^* \big) = 
\omega\big(\beta_k(\lambda_{\sigma}(s) ) \big) = \omega (\lambda_{\sigma}(s) )\,.$$
It follows that $\omega$ is $\widetilde\gamma$-invariant.
Hence, $\omega= \tau_{FCH}$.
This shows that $(ii)$ holds.

Conversely, if $(ii)$ holds, then, as $\beta = \widetilde\gamma\circ n$,
it is clear that $\tau_{FCH}$ is the unique $\widetilde\gamma$-invariant tracial state of $A_{FCH}$,
so $(i)$ holds by using Proposition~\ref{FCH-gamma}.
\end{proof}

\begin{proof}[Proof of Theorem~\ref{FCH-P}]

$a)$ From \cite[Corollary~3.9]{BC2}, we know that when $K$ belongs to the class $\P$,
the tracial states of  $C_r^*(A_{FCH}, K, \beta, \omega)$
are in a one-to-one correspondence with the $\beta$-invariant tracial states of $A_{FCH}$.
Hence, it follows from Proposition~\ref{beta} that $C^*_r(G,\sigma) \cong C^*_r(A_{FCH},K,\beta,\omega)$ has a unique tracial state
if and only if $(G,\sigma)$ satisfies Kleppner's condition. 

$b)$ Assume that $(H, \sigma_H)$ satisfies Kleppner's condition. Since $H$ is FC-hypercentral,
we get from Theorem~\ref{K} that $A_{FCH} \simeq C_r^*(H,\sigma_H)$ is simple with a unique tracial state.
This implies that $A_{FCH}$ has a unique $\beta$-invariant tracial state and that the system $(A_{FCH}, K, \beta, \omega)$ is minimal.
Hence, it follows from \cite[Corollary~3.11]{BC2} that $C^*_r(G,\sigma) \cong C^*_r(A_{FCH},K,\beta,\omega)$ is simple with a unique tracial state.
\end{proof}

Let $\ICCP$ denote the class of groups satisfying the assumption in Theorem~\ref{FCH-P}.
Part $a)$ of this theorem shows that $\ICCP$ is contained in the class $\K_{UT}$.
We  believe that $\ICCP \subset \K$, i.e., that we also have $\ICCP \subset \K_{C^*S}$,
but we have not been able to prove this. Part $b)$ of Theorem~\ref{FCH-P} is a somewhat weaker statement;
its proof shows that we would have $\ICCP \subset \K_{C^*S}$ if one could answer positively the following:

\begin{question}\label{minimal}
Assume that $(G,\sigma)$ satisfies Kleppner's condition. Is the system $(A_{FCH}, K, \beta, \omega)$ always minimal ?
That is, is $\{ 0\}$ the only proper $\beta$-invariant ideal of $A_{FCH}$ ?
\end{question} 

In this regard, we also remark that if $G$ belongs to $\ICCP$, $G$ is exact, and $C^*_r(G,\sigma)$ has stable rank one whenever $(G,\sigma)$ satisfies Kleppner's condition, then Corollary~\ref{GUTS2} and Theorem~\ref{FCH-P}~a) together give that $G$ belongs to $\K$.

\medskip Note that if $G/FC(G)$ belongs to $\P$, then $G/FC(G)$ is ICC,
so the upper FC-central series of $G$ stops at $F_1$, i.e., $FCH(G)=FC(G)$.
Hence, Theorem~\ref{FCH-P} gives:

\begin{corollary}\label{FCH-P-II}
Assume that $G/FC(G)$ belongs to $\P$. Then we have:

\begin{itemize}
\item[a)]  $(G,\sigma)$ satisfies Kleppner's condition if and only if $(G,\sigma)$ has the unique trace property.
\item[b)] Set $H=FC(G)$ and let $\sigma_H$ denote the restriction of $\sigma$ to $H\times H$.
If $(H, \sigma_H)$ satisfies Kleppner's condition, then $(G,\sigma)$ is C$^*$-simple and has the unique trace property.
\end{itemize}

\end{corollary}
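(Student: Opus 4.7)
The plan is to show that, under the hypothesis $G/FC(G) \in \P$, the FC-hypercenter of $G$ coincides with its FC-center, so that Corollary~\ref{FCH-P-II} reduces immediately to Theorem~\ref{FCH-P}.

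First, recall from the discussion in the introduction that every group belonging to $\P$ is ICC. Assuming $G/FC(G) \in \P$, the quotient $G/FC(G)$ is therefore ICC. Since $FC(G)$ is a normal subgroup of $G$, Proposition~\ref{ICC-quotient} applies with $N = FC(G)$ and yields $FCH(G) \subset FC(G)$. The reverse inclusion $FC(G) = F_1 \subset FCH(G)$ is immediate from the construction of the upper FC-central series. Hence $FCH(G) = FC(G)$, so $ICC(G) = G/FCH(G) = G/FC(G) \in \P$.

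With this identification, the hypothesis of Theorem~\ref{FCH-P} is satisfied, and its conclusion $a)$ gives immediately part $a)$ of the corollary. For part $b)$, note that $H = FC(G) = FCH(G)$ and the cocycle $\sigma_H$ in the corollary is exactly the one appearing in Theorem~\ref{FCH-P}~$b)$; hence that statement applies verbatim.

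There is no genuine obstacle here: the only substantive observation is that membership in $\P$ implies the ICC property, which then forces the upper FC-central series of $G$ to stabilize already at $F_1$ via Proposition~\ref{ICC-quotient}. Everything else is a direct invocation of Theorem~\ref{FCH-P}.
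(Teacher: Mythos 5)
Your proposal is correct and follows essentially the same route as the paper: the paper likewise notes that $G/FC(G)\in\P$ forces $G/FC(G)$ to be ICC, so the upper FC-central series stops at $F_1$ and $FCH(G)=FC(G)$, after which Theorem~\ref{FCH-P} applies verbatim. Your use of Proposition~\ref{ICC-quotient} to justify $FCH(G)\subset FC(G)$ is just a slightly more explicit phrasing of the same observation.
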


\begin{example}
Let $n \in \N, \, n\geq 2$ and set $G=\langle a,b \mid ab^n=b^na \rangle$.
Then $G$ is a so-called Baumslag-Solitar group, often denoted by $BS(n,n)$.
We have
\[
FCH(G)=FC(G)=Z(G)=\langle b^n \rangle\simeq\, \Z
\]
and $ICC(G)\simeq\Z*\Z_n\in\mathcal P$ (since $\Z*\Z_n$ is a Powers group \cite{H}), so $G \in \ICCP$.

\medskip Let $f$ denote  the surjective homomorphism $f\colon G\to\Z^2$ satisfying $f(a)=(1,0)$ and $f(b)=(0,1)$.
For $\theta\in\T$, let $\omega_\theta \in Z^2(\Z^2, \T)$ be given by $\omega_\theta(m,n)=e^{2\pi i\theta m_2n_1}$,
and define $\sigma_\theta \in Z^2(G,\T)$ by $\sigma_\theta(x,y)=\omega_\theta(f(x),f(y))$.
It can be shown that every two-cocycle on $G$ is cohomologous to one of this form.

\medskip Then one easily verifies that $(G,\sigma_\theta)$ satisfies Kleppner's condition  if and only if $\theta$ is irrational.
Hence, Theorem~\ref{FCH-P}~a) gives that $(G,\sigma_\theta)$ has the unique trace property  if and only if $\theta$ is irrational.
Theorem~\ref{FCH-P}~b) is not useful in this example since $\sigma_\theta$ restricts to $1$ on $Z(G)$.
However, it can be shown
that $(G,\sigma_\theta)$ is C$^*$-simple if and only if $\theta$ is irrational.
Hence, it follows that $G=BS(n,n)$ belongs to $\K$. 
\end{example}

\medskip We will come back to this example and also discuss other conditions
ensuring simplicity and/or uniqueness of the tracial state for reduced twisted group C$^*$-algebras in a subsequent paper.

\medskip \noindent
E.B.{'}s address:
Institute of Mathematics,
University of Oslo,
P.B.\ 1053 Blindern,
0316 Oslo,
Norway.
E-mail : bedos@math.uio.no.

\medskip \noindent 
T.O.{'}s address:
School of Mathematical and Statistical Sciences,
Arizona State University,
P.O.\ Box 871804,
Tempe, AZ 85287-1804,
USA.
E-mail : omland@asu.edu.
\end{document}